\font\smallit=cmti10
\font\smalltt=cmtt10
\renewcommand\section{\@startsection {section}{1}{\z@}
{-30pt \@plus -1ex \@minus -.2ex}
{2.3ex \@plus.2ex}
{\normalfont\normalsize\bfseries\boldmath}}
\renewcommand\subsection{\@startsection{subsection}{2}{\z@}
{-3.25ex\@plus -1ex \@minus -.2ex}
{1.5ex \@plus .2ex}
{\normalfont\normalsize\bfseries\boldmath}}
\renewcommand{\@seccntformat}[1]{\csname the#1\endcsname. }
\newtheorem{theorem}{Theorem}
\newtheorem{lemma}{Lemma}
\newtheorem{corollary}{Corollary}
\newtheorem*{SG}{The Sprague-Grundy Theorem}
\newtheorem{observation}{Observation}
\theoremstyle{definition}
\newtheorem{conjecture}{Conjecture}
\begin{document}

\begin{center}
\uppercase{\bf Advances in Finding Ideal Play on Posets Games}
\vskip 20pt
{\bf A. Clow}\\
{\smallit Department of Mathematics and Statistics, St.Francis Xavier University, Antigonish, Nova Scotia, Canada}\\
\vskip 10pt
{\bf S. Finbow\footnote{This research was funded by Natural Sciences and Engineering Research Council of Canada grant numbers 2014-06571. }}\\
{\smallit Department of Mathematics and Statistics, St.Francis Xavier University, Antigonish, Nova Scotia, Canada}\\
\end{center}
\vskip 20pt
\centerline{\smallit Received: , Revised: , Accepted: , Published: } 
\vskip 30pt

\centerline{\bf Abstract}
\noindent
Poset games are a class of combinatorial game that remain unsolved.  Soltys and Wilson  proved that computing wining strategies is in \textbf{PSPACE} and aside from special cases such as Nim and N-Free games,  \textbf{P} time algorithms for finding ideal play are unknown. This paper presents methods calculate the nimber of posets games allowing for the classification of winning or losing positions. The results present an equivalence of ideal strategies on posets that are seemingly unrelated.  

\pagestyle{myheadings} 
\markright{\smalltt INTEGERS: 21 (2021)\hfill} 
\thispagestyle{empty} 
\baselineskip=12.875pt 
\vskip 30pt

\section{Introduction}

\textit{Posets Games} are impartial combinatorial games whose game boards are partially ordered set (posets) \(P\)  on which players take turns removing an element \(p\in P\) and every element \(p'\geq_P p\) from \(P\).  We define \(P-p_\leq=P\setminus \{p':p\leq p'\}\). Each turn a player must remove an element if they can. In this paper we consider \textit{normal play} games, where the the first player to have no move loses.  Examples of poset game include Nim, Chomp, Subset Take-Away, Divisors and Geography.

\begin{figure}[h!]\label{F1}
\centering
\scalebox{0.5}{
\begin{tikzpicture}

\draw[thick,black] (-0.5,-0.5) rectangle (10.5,3.5);

    \draw(0,0)--(1,1);
    \draw(2,0)--(1,1)--(1,2)--(3,3)--(4,2)--(2,0);
    \draw(4,0)--(2,2)--(3,3);

    \filldraw[black] (0,0) circle[radius=1mm];
    \filldraw[black] (2,0) circle[radius=1mm];
    \filldraw[black] (4,0) circle[radius=1mm];
 
    \filldraw[black] (1,1) circle[radius=1mm];
    \filldraw[red] (3,1) circle[radius=1mm];
 
    \filldraw[black] (1,2) circle[radius=1mm];
    \filldraw[black] (2,2) circle[radius=1mm];
    \filldraw[black] (4,2) circle[radius=1mm];
 
    \filldraw[black] (3,3) circle[radius=1mm];
 
    \draw[->](4.5,1.5)--(6,1.5);
 
    \draw(6,0)--(7,1)--(7,2)--(7,1)--(8,0);
 
    \filldraw[black] (6,0) circle[radius=1mm];
    \filldraw[black] (8,0) circle[radius=1mm];
    \filldraw[black] (10,0) circle[radius=1mm];
 
    \filldraw[black] (7,1) circle[radius=1mm];

    \filldraw[black] (7,2) circle[radius=1mm];

\draw[thick,black] (11.5,-0.5) rectangle (22.5,3.5);

    \draw(12,2)--(13,0)--(14,2)--(15,0)--(16,2);
    \draw(21,2)--(21,0);

    \filldraw[black] (12,2) circle[radius=1mm];
    \filldraw[black] (13,0) circle[radius=1mm];
    \filldraw[black] (14,2) circle[radius=1mm];
    \filldraw[red] (15,0) circle[radius=1mm];
    \filldraw[black] (16,2) circle[radius=1mm];

    \draw[->](17,1.5)--(19.5,1.5);

    \filldraw[black] (21,2) circle[radius=1mm];
    \filldraw[black] (21,0) circle[radius=1mm];

\draw[thick,black] (-0.5,-5.5) rectangle (10.5,-1.5);

    \draw(0,-4.5)--(1,-3)--(2,-4.5);
    \draw(2.5,-4.5)--(3.5,-3)--(4.5,-4.5);
    
    \draw(7,-4.5)--(8,-3)--(9,-4.5);

    \draw[->](5,-3.5)--(6.5,-3.5);

    \filldraw[black] (0,-4.5) circle[radius=1mm];
    \filldraw[black] (1,-3) circle[radius=1mm];
    \filldraw[black] (2,-4.5) circle[radius=1mm];
    
    \filldraw[black] (2.5,-4.5) circle[radius=1mm];
    \filldraw[black] (3.5,-3) circle[radius=1mm];
    \filldraw[red] (4.5,-4.5) circle[radius=1mm];
    
    \filldraw[black] (7,-4.5) circle[radius=1mm];
    \filldraw[black] (8,-3) circle[radius=1mm];
    \filldraw[black] (9,-4.5) circle[radius=1mm];
    
    \filldraw[black] (10,-4.5) circle[radius=1mm];

\draw[thick,black] (11.5,-5.5) rectangle (22.5,-1.5);

    \draw(14,-5)--(12,-3.5)--(14,-2)--(16,-3.5)--(14,-5);
    
    \draw[->](17,-3.5)--(19.5,-3.5);

    \filldraw[red] (14,-5) circle[radius=1mm];
    \filldraw[black] (12,-3.5) circle[radius=1mm];
    \filldraw[black] (14,-2) circle[radius=1mm];
    \filldraw[black] (16,-3.5) circle[radius=1mm];
\end{tikzpicture}
}

    \caption{Examples of moves in poset games}
    \label{ex.1.1}
\end{figure}
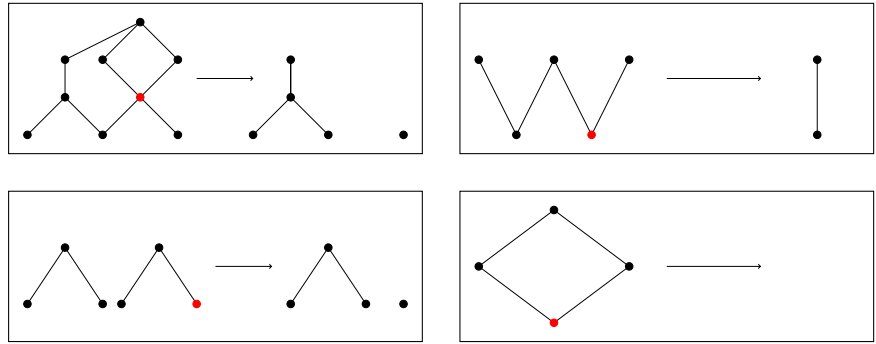

Nim, central to the theory of all impartial games, is a poset game played on any number of disjoint totally ordered sets of finite or transfinite cardinality called piles (see Figure~\ref{ex.1.2}) and as a result serves as one of the simplest poset games. It is well known that a game of Nim  is in \(\mathcal{P}\) (the set of previous player win games, also called a $\mathcal{P}$-positon) if and only if the binary XOR sum of the piles heights is 0 (zero) \cite{conway2003winning}.

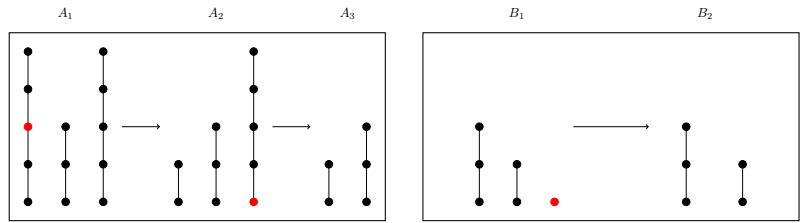
\begin{figure}[h!]\label{F2}
\centering
 \scalebox{0.5}{
        \begin{tikzpicture}

      \draw[black,thick] (0,0) rectangle (10,5);

            \draw (0.5,0.5)--(0.5,4.5);
            \draw (1.5,0.5)--(1.5,2.5);
            \draw (2.5,0.5)--(2.5,4.5);
            
            \draw (4.5,0.5)--(4.5,1.5);
            \draw (5.5,0.5)--(5.5,2.5);
            \draw (6.5,0.5)--(6.5,4.5);
            
            \draw (8.5,0.5)--(8.5,1.5);
            \draw (9.5,0.5)--(9.5,2.5);
            
            \draw[->](3,2.5)--(4,2.5);
            \draw[->](7,2.5)--(8,2.5);
            
	   \node[fill=none] at (1.5,5.5) (nodes) {\(A_1\)};
            \node[fill=none] at (5.5,5.5) (nodes) {\(A_2\)};
            \node[fill=none] at (9,5.5) (nodes) {\(A_3\)};
    
            \filldraw[black] (0.5,0.5) circle[radius=1mm];
            \filldraw[black] (0.5,1.5) circle[radius=1mm];
            \filldraw[red] (0.5,2.5) circle[radius=1mm];
            \filldraw[black] (0.5,3.5) circle[radius=1mm];
            \filldraw[black] (0.5,4.5) circle[radius=1mm];
            
            \filldraw[black] (1.5,0.5) circle[radius=1mm];
            \filldraw[black] (1.5,1.5) circle[radius=1mm];
            \filldraw[black] (1.5,2.5) circle[radius=1mm];
            
            \filldraw[black] (2.5,0.5) circle[radius=1mm];
            \filldraw[black] (2.5,1.5) circle[radius=1mm];
            \filldraw[black] (2.5,2.5) circle[radius=1mm];
            \filldraw[black] (2.5,3.5) circle[radius=1mm];
            \filldraw[black] (2.5,4.5) circle[radius=1mm];

            \filldraw[black] (4.5,0.5) circle[radius=1mm];
            \filldraw[black] (4.5,1.5) circle[radius=1mm];
            
            \filldraw[black] (5.5,0.5) circle[radius=1mm];
            \filldraw[black] (5.5,1.5) circle[radius=1mm];
            \filldraw[black] (5.5,2.5) circle[radius=1mm];
            
            \filldraw[red] (6.5,0.5) circle[radius=1mm];
            \filldraw[black] (6.5,1.5) circle[radius=1mm];
            \filldraw[black] (6.5,2.5) circle[radius=1mm];
            \filldraw[black] (6.5,3.5) circle[radius=1mm];
            \filldraw[black] (6.5,4.5) circle[radius=1mm];  
            
            \filldraw[black] (8.5,0.5) circle[radius=1mm];
            \filldraw[black] (8.5,1.5) circle[radius=1mm];
            
            \filldraw[black] (9.5,0.5) circle[radius=1mm];
            \filldraw[black] (9.5,1.5) circle[radius=1mm];
            \filldraw[black] (9.5,2.5) circle[radius=1mm];

        \draw[black,thick] (11,0) rectangle (21,5);

            \draw(12.5,0.5)--(12.5,2.5);
            \draw(13.5,0.5)--(13.5,1.5);
            
            \draw[->](15,2.5)--(17,2.5);
            
            \draw(18,0.5)--(18,2.5);
            \draw(19.5,0.5)--(19.5,1.5);

            \filldraw[black] (12.5,0.5) circle[radius=1mm];
            \filldraw[black] (12.5,1.5) circle[radius=1mm];
            \filldraw[black] (12.5,2.5) circle[radius=1mm];
            
            \filldraw[black] (13.5,0.5) circle[radius=1mm];
            \filldraw[black] (13.5,1.5) circle[radius=1mm];
            
            \filldraw[red] (14.5,0.5) circle[radius=1mm];

            \filldraw[black] (18,0.5) circle[radius=1mm];
            \filldraw[black] (18,1.5) circle[radius=1mm];
            \filldraw[black] (18,2.5) circle[radius=1mm];
            
            \filldraw[black] (19.5,0.5) circle[radius=1mm];
            \filldraw[black] (19.5,1.5) circle[radius=1mm];

	   \node[fill=none] at (13.5, 5.5) (nodes) {\(B_1\)};
            \node[fill=none] at (18.5,5.5) (nodes) {\(B_2\)};

        \end{tikzpicture}
    }
    
    \caption{Examples of moves in Nim}
    \label{ex.1.2}
\end{figure}

Aside from Nim  very little is known about ideal play on general poset games. Soltys and Wilson \cite{soltys2011complexity} proved that computing wining strategies is in \textbf{PSPACE} and aside from special cases such as Nim and N-Free games \cite{fenner2015combinatorial}, which are not the focus of this paper, \textbf{P} time algorithms for finding ideal play are unknown.  Byrnes \cite{byrnes2003poset} also proved non-constructively that local periodicity exists in Chomp and poset games that resemble Chomp. Attempts at constructive results have thus far been largely unsuccessful \cite{zeilberger2004chomp}. Computational efforts, like those of Zeilburger \cite{zeilberger2001three} demonstrates that this local periodicity leads to no discernible global pattern even in cases as small as 3 by \(n\) Chomp. 

One approach is to make use of the Sprague Grundy function which recursively assigns a non-negative integer (or \textit{nimber}) to each impartial game. Let \(G\) be the set of all impartial games and let \(\emptyset\equiv\)the empty-game.  Define the \textit{nimber} of \(A\), denoted \(\mathcal{G}(A)\), recursively in the universe of non-negative integers as follows:
\begin{itemize}
    \item Let \(A\in G\), \(\mathcal{G}(A)=Min\{\mathcal{G}(A'):\) \(A'\) one move from \(A\}^C\)
    \item \(\mathcal{G}(\emptyset)=0\).
\end{itemize}
For an impartial game \(A\), define we define the \textit{option value set of $A$} , to be the set \(A^*=\{\mathcal{G}(A'):\) \(A'\) one move from \(A\}\).  We will call  \(A\)   \textit{weakly-canonical} if  \(|A^*|=\mathcal{G}(A)\).   If there exists a set of legal moves that takes the game \(B\) to the game \(A\), we will say \(A\) is a \textit{follower} of \(B\) and denote this \(A<B\). As we are only dealing with impartial games there should be no confusion between this and the game lattice or the ordering of Partisan games. The minimum excluded value of a set $S$ is the least nonnegative integer which is not included in $S$ and is denoted $mex(S)$. Note that $\mathcal{G}(A)=mex(A^*)$. 

\begin{SG}[\cite{albert2019lessons}]
An impartial combinatorial game \(A\) is a \(\mathcal{P}\) position if and only if \(\mathcal{G}(A)=0\).
\end{SG}

An equivalent formation of the Sprague Grundy Theorem (given below) provides insights on how to approach so called disjoint games. Games are said to be disjoint or a sum of games, if a single game is created from two disjoint games such that a player may make a move on either game but not both during a turn \cite{conway2003winning}. This idea is applicable in all combinatorial games but has a particularly nice expression when limited to poset games. A poset game \(P\) that is made up of disjoint pieces \(A\) and \(B\) can be expressed simply as the poset game played on \(A+B\). 

More formally, a \textit{Fence} \(F\) is a Poset \(F=\{f_0,f_1,f_2,..,f_n\}\) such that \(f_0>f_1,f_1<f_2,f_2>f_3,f_3<f_4,...f_{n-1}<f_n\) or \(f_0<f_1,f_1>f_2,f_2<f_3,f_3>f_4,...f_{n-1}>f_n\) if \(n\) is even and \(f_0>f_1,f_1<f_2,f_2>f_3,f_3<f_4,...f_{n-1}>f_n\) or \(f_0<f_1,f_1>f_2,f_2<f_3,f_3>f_4,...f_{n-1}<f_n\) if \(n\) is odd, and such that these are all comparabilities between the points. The points \(f_0\) and \(f_n\) are the \textit{endpoints} of the fence. A Poset \(P\) is \textit{connected} if and only if for all \(p_1,p_2\in P\), there is a fence \(F\subset P\) with an endpoints \(p_1,p_2\). A Poset that is not connected is called \textit{disconnected} \cite{schroder2003ordered}.


\begin{SG}[\cite{albert2019lessons}]
Every impartial game \(A\) is equivalent in terms of being summed as a disjoint game to a pile in Nim of height \(m\), if and only if \(\mathcal{G}(A)=m\). 
\end{SG}

\begin{corollary} 
Let \(A\) and \(B\) be impartial combinatorial games. Then \(\mathcal{G}(A+B)=\mathcal{G}(A)\oplus \mathcal{G}(B)\), where \(\oplus\) denotes binary XOR.
\end{corollary}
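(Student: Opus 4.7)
The plan is to proceed by structural induction on the options of $A+B$, reducing the claim to a single $\mathrm{mex}$ identity. The base case $A = B = \emptyset$ is immediate from $\mathcal{G}(\emptyset) = 0$. For the inductive step, write $a = \mathcal{G}(A)$ and $b = \mathcal{G}(B)$ and observe that a single move in $A+B$ acts on exactly one summand, so the followers of $A+B$ are the games $A' + B$ with $A' < A$ together with the games $A + B'$ with $B' < B$. Applying the induction hypothesis to each follower and using $\mathcal{G}(X) = \mathrm{mex}(X^*)$ yields
\[
    (A+B)^* \;=\; \{a^* \oplus b : a^* \in A^*\} \;\cup\; \{a \oplus b^* : b^* \in B^*\},
\]
so the corollary reduces to the numerical identity $\mathrm{mex}\bigl(\{a^* \oplus b\} \cup \{a \oplus b^*\}\bigr) = a \oplus b$.

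This $\mathrm{mex}$ identity is the heart of the argument and splits in two. First, I would rule out $a \oplus b$ from the set: an equation $a \oplus b = a^* \oplus b$ cancels to $a^* = a$, contradicting $a = \mathrm{mex}(A^*) \notin A^*$, with the symmetric case handled identically. Second, I would exhibit every $c < a \oplus b$ in the set. To do so, locate the most significant bit $k$ where $c$ and $a \oplus b$ disagree; because $c$ is the smaller of the two, $a \oplus b$ carries a $1$ in bit $k$, so exactly one of $a, b$ does. Assume without loss of generality that it is $a$, and set $a' := a \oplus b \oplus c$. A bit-by-bit comparison shows that $a'$ agrees with $a$ above position $k$ and has a $0$ in position $k$ where $a$ has a $1$, so $a' < a$; the definition of $\mathrm{mex}$ then forces $\{0, 1, \ldots, a-1\} \subseteq A^*$, whence $a' \in A^*$, and $a' \oplus b = c$ places $c$ in $(A+B)^*$.

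The step I expect to require the most care is the bit-manipulation argument establishing $a' < a$, together with the implicit appeal to $\{0, 1, \ldots, a-1\} \subseteq A^*$; both are standard but are the only non-formal ingredients of the argument and so deserve explicit treatment. Once the $\mathrm{mex}$ identity is secured, the corollary drops out as a two-line consequence of the induction, and iterating it produces $\mathcal{G}(A_1 + \cdots + A_n) = \mathcal{G}(A_1) \oplus \cdots \oplus \mathcal{G}(A_n)$, which specialises to the classical XOR rule for Nim recalled earlier in the introduction.
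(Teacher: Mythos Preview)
Your overall strategy is the standard direct proof and is sound, but there is a computational slip that breaks the argument as written. You set $a' := a \oplus b \oplus c$ and then assert three things: that $a'$ agrees with $a$ above bit $k$, that $a'$ has a $0$ in bit $k$, and that $a' \oplus b = c$. None of these hold for this choice. Above bit $k$ we have $c = a \oplus b$, hence $a \oplus b \oplus c = 0$ there, which need not match $a$; at bit $k$ we have $(a \oplus b)_k = 1$ and $c_k = 0$, so $(a \oplus b \oplus c)_k = 1$, not $0$; and $a' \oplus b = a \oplus c$, not $c$. The definition you want is $a' := b \oplus c$. Then $a' \oplus b = c$ is immediate; above bit $k$ one has $b \oplus c = b \oplus (a \oplus b) = a$; and at bit $k$ both $b$ and $c$ carry a $0$ (since $a$ carries the $1$ in $a\oplus b$), so $a'_k = 0$ while $a_k = 1$, giving $a' < a$. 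With this correction the rest of your argument goes through.

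As for comparison with the paper: the paper does not prove this corollary at all. It is recorded as an immediate consequence of the second form of the Sprague--Grundy Theorem quoted just above it (every impartial game is equivalent, for purposes of disjoint sums, to a Nim pile of height equal to its nimber), combined with the classical XOR rule for Nim stated earlier in the introduction. Your route is more self-contained, deriving the XOR law directly from the $\mathrm{mex}$ identity rather than importing the solved theory of Nim; the paper's route is essentially a one-line citation.
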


In this paper we provide insights into how to play on connected poset games.  We do so by factoring/partitioning  a connected poset \(A\)  into subposets that give meaningful information about the nimber \(\mathcal{G}(A)\) and the subgames of \(A\). Combining results with the Sprague-Grundy Theorem to provide insights into a larger class of poset games.   The work is related to playing games on the ordinal sum of Posets, but generalizes this idea. Let $A$ and $B$ 
 be posets with disjoint underlying sets. Then the ordinal sum $A:B$
 is the poset on $A\cup B$
 with 
$x\le y$ if, either $x, y\in A$
 and $x\le_A y$
; or 
$x, y\in B$ and $x\le_B y$
; or $x\in A$
 and $y\in B$.  In other word any move in $A$, eliminates all possible moves in $B$ for the remainder for the game.  This concept has been generalized to impartial games.  The following result is due to Fisher, Nowakowski and Santos \cite{fisher2018sterling} where, \(mex(S,k)=mex(S')\), for \(S'=S\cup \{mex(S,0),mex(S,1),...,mex(S,k-1)\}\) and \(mex(S,0)=mex(S)\).
 
 \begin{theorem}[\cite{fisher2018sterling}]\label{rjn}
Let \(G,H\) be impartial games. Then, $ \mathcal{G}(G : H ) = mex( G^*,\mathcal{G}(H)).$
\end{theorem}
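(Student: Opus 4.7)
The plan is to induct on the game $H$. For the base case $H=\emptyset$, the ordinal sum degenerates to $G:H=G$, and $mex(G^*,0)=mex(G^*)=\mathcal{G}(G)$, so the formula holds.

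For the inductive step, I would first enumerate the legal followers of $G:H$. A move may occur inside $H$, producing $G:H'$ for some $H'<H$, or inside $G$; in the latter case the ordinal-sum structure (generalised from posets) wipes out the $H$-component entirely and leaves $G'$ for some $G'<G$. Thus
\[
(G:H)^* \;=\; G^* \;\cup\; \{\mathcal{G}(G:H') : H' < H\},
\]
and the inductive hypothesis rewrites each $\mathcal{G}(G:H')$ as $mex(G^*,\mathcal{G}(H'))$.

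Next I would study the sequence $m_j := mex(G^*,j)$. The recursive definition forces $m_0<m_1<m_2<\cdots$, since $m_{j+1}$ is the mex of a set already containing $m_0,\ldots,m_j$. Because $mex\{\mathcal{G}(H'):H'<H\}=\mathcal{G}(H)=:k$, the values $0,1,\ldots,k-1$ are each attained by some follower of $H$ while $k$ itself is not, so $\{m_{\mathcal{G}(H')}:H'<H\}$ contains $\{m_0,\ldots,m_{k-1}\}$ and avoids $m_k$.

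To conclude I would identify the mex of $G^* \cup \{m_{\mathcal{G}(H')}:H'<H\}$ as $m_k$: every integer strictly below $m_k$ already lies in $G^*\cup\{m_0,\ldots,m_{k-1}\}$ by the definition of $m_k$, while $m_k$ itself is absent because $m_k\notin G^*$ (it is a mex of a superset of $G^*$) and because strict monotonicity of $j\mapsto m_j$ prevents any $\mathcal{G}(H')\ne k$ from yielding $m_{\mathcal{G}(H')}=m_k$. The main obstacle I anticipate is controlling followers $H'$ with $\mathcal{G}(H')>k$, which could threaten to contribute extra elements to the set and spoil the mex; strict monotonicity of the $m_j$ places all such contributions strictly above $m_k$, so they are harmless, and the equality $\mathcal{G}(G:H)=m_k=mex(G^*,\mathcal{G}(H))$ falls out.
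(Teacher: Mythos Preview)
The paper does not supply its own proof of this theorem: it is quoted as a known result of Fisher, Nowakowski and Santos \cite{fisher2018sterling} and used without argument. So there is no in-paper proof to compare against.

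Your sketch is mathematically sound and is in fact the standard argument for this identity: induct on $H$, split the options of $G:H$ into those coming from $G$ (which contribute exactly $G^*$) and those coming from $H$ (which contribute $mex(G^*,\mathcal{G}(H'))$ by induction), observe that $j\mapsto mex(G^*,j)$ is strictly increasing, and conclude that the mex of the combined option set is $mex(G^*,k)$ with $k=\mathcal{G}(H)$. The treatment of the potential obstacle---options $H'$ with $\mathcal{G}(H')>k$---is exactly right: monotonicity pushes their contributions above $m_k$.

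One notational caution: in this paper $A<B$ is defined to mean that $A$ is a \emph{follower} of $B$ (reachable by a sequence of moves, not necessarily a single move), whereas your proof uses $H'<H$ to range over positions one move from $H$. The identity $\mathcal{G}(H)=mex\{\mathcal{G}(H'):H'<H\}$ and the decomposition of $(G:H)^*$ are only correct under the one-move reading. If you adopt the paper's convention, write ``$H'$ an option of $H$'' or restrict to $H'$ with $\mathcal{G}(H')\in H^*$ to keep the argument airtight.
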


In Section 2, we define a mapping between poset which is preserves the underlying order of the poset.  This map is used to partition (or factor) the poset and establish a relationship between this partition and the nimbers in Section 3. Sections 4 provides examples of applications of the results and Section 5 provides some concluding remarks and directions for future research.

\section{Order Compressing Map}

%

We define a function \(f:P\rightarrow Q\)  such that \(P\) and \(Q\) are partially ordered sets, to be  \textit{order compressing} if for all \(x,y\in P\), \(f(x)=f(y)=q\in Q\) if and only if for every $z\in P$,
\begin{itemize}
    \item if \(z<_P x\) and \(z<_P y\), then \(f(z)\leq_Q q\)
    \item if \(z\nless_P x\) and \(z\nless_P y\), then \(f(z)\nless_Q q\)
    \item if 
\(  \left\{ \begin{array}{ll}
        z<_P x \mbox{ and } z\nless_P y;\\
        z\nless_P x \mbox{ and } z<_P y\end{array} \right\} \), then \(f(z)=q\)
\end{itemize}

  Figure~\ref{ex.2.1} gives an example of an order compressing function. Each order compressing function is clearly a homomorphism however every order compressing function is also order reflecting. In fact the third a condition given makes order compressions an even stronger condition than simply being an order reflecting homomorphism.  Figure~\ref{ex.2.2} gives an example of a homomorphism which is not order compressing.

\begin{figure}[h!]
    \centering

\scalebox{0.8}{ 
    \begin{tikzpicture}
    
    
    	\draw(1.5,0.5)--(0.5,1.5)--(0.5,3.5)--(1.5,5)--(3.5,3.5)--(4.5,2.5)--(3.5,1.5)--(1.5,0.5);
       	\draw (3.5,1.5)--(2.5,2.5)--(3.5,3.5)--(2.5,5)--(0.5,3.5);

            \filldraw[green] (1.5,0.5) (1.4,0.4) -- (1.6,0.4) -- (1.5,0.6) -- cycle;

            \filldraw[blue] (0.5,1.5)  (0.4,1.4) --(0.6,1.4) -- (0.6,1.6) -- (0.4,1.6) -- cycle;
            \filldraw[blue] (0.5,2.5)  (0.4,2.4) --(0.6,2.4) -- (0.6,2.6) -- (0.4,2.6) -- cycle;
            \filldraw[blue] (0.5,3.5)  (0.4,3.4) --(0.6,3.4) -- (0.6,3.6) -- (0.4,3.6) -- cycle;

            \filldraw[purple] (2.5,2.5) circle[radius=1mm];
            \filldraw[purple] (3.5,3.5) circle[radius=1mm];
            \filldraw[purple] (3.5,1.5) circle[radius=1mm];
            \filldraw[purple] (4.5,2.5) circle[radius=1mm];

            \filldraw[orange] (2.5,5) (2.45,4.9) --(2.55,4.9) -- (2.6,5.05) -- (2.5,5.15) --(2.4,5.05) -- cycle;
            \filldraw[orange] (1.5,5) (1.45,4.9) --(1.55,4.9) -- (1.6,5.05) -- (1.5,5.15) --(1.4,5.05) -- cycle;

        \draw[->](5,2.5)--(7,2.5);
        \node[fill=none] at (5.9,3) (nodes) {\(f_1\)};

	\draw(8.5,0.5)--(7.5,2.5)--(8.5,4.5)--(9.5,2.5)--(8.5,0.5);
		
            \filldraw[green] (8.5,0.5) (8.4,0.4) -- (8.6,0.4) -- (8.5,0.6) -- cycle;
            \filldraw[blue] (7.5,2.5) (7.4,2.4) --(7.6,2.4) -- (7.6,2.6) -- (7.4,2.6) -- cycle;
            \filldraw[orange] (8.5,4.5) (8.45,4.4) --(8.55,4.4) -- (8.6,4.55) -- (8.5,4.65) --(8.4,4.55) -- cycle;
            \filldraw[purple] (9.5,2.5) circle[radius=1mm];

    \end{tikzpicture}
  }

    \caption{An order compressing homomorphism}
    \label{ex.2.1}
\end{figure}
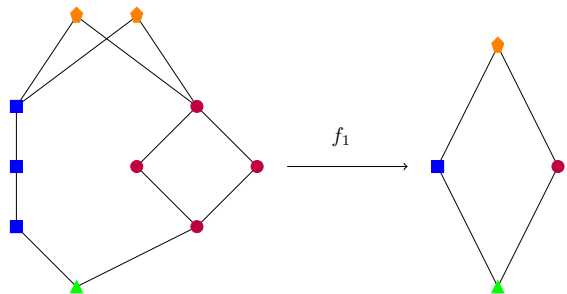

\begin{figure}[h!]
    \centering

\begin{tikzpicture}
	
		\draw (-3, 2)--(-2, 0);
		\draw (-2, 0)--(-1, 2);
		\draw (-1, 2) -- (0, 0);
		
		 \filldraw[red] (-3, 2)  circle[radius=1mm];
		 \filldraw[blue]  (-2, 0) (-2.1,-0.1) -- (-1.9,-0.1) -- (-1.9,0.1) -- (-2.1,0.1) -- cycle;
		 \filldraw[red]   (-1, 2) circle[radius=1mm];
		 \filldraw[blue]  (0, 0) (-0.1,-0.1) -- (0.1,-0.1) -- (0.1,0.1) -- (-0.1,0.1) -- cycle;

      \draw[->](1,1)--(3,1);
        \node[fill=none] at (2,1.5) (nodes) {\(f_2\)};
        
        \draw (4,2)--(4,0);
    	\filldraw[red]  (4,2) circle[radius=1mm];
        \filldraw[blue] (4,0) (3.9,-0.1) -- (4.1,-0.1) -- (4.1,0.1) -- (3.9,0.1) -- cycle;


\end{tikzpicture}

    \caption{A map which is a homomorphism, but not order compressing}
    \label{ex.2.2}
\end{figure}
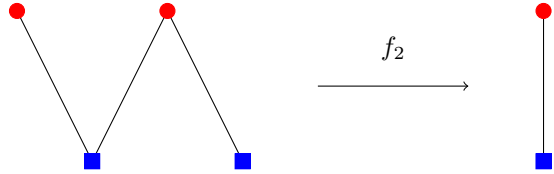

When \(f:P\rightarrow Q\) is order compressing, an {\it $f$-factor} of \(P\) is a subposet of \(P\) defined by \(f^{-1}(x)\) for some \(x\in Q\). The set of $f$-factors is a {\it $Q$-factorization} of $P$.  When the choice of $f$ is clear from the context we call an $f$-factor of $P$ a {\it factor of $P$} and $Q$-factorization of $P$ a {\it factorization} of $P$. In Figure~\ref{ex.2.1} each factor is given a unique colour and shape, as is the convention for the rest of the paper.

\section{Equivalencies of Games}

In this section we establish that order compressing maps which preserve nimbers in individuals factors, preserves the nimber of the entire poset.  We start with the following observation.

\begin{lemma}\label{not weaklycanonical}
Let \(P\) be a poset, if for all \(A<P\), \(\mathcal{G}(A)\neq \mathcal{G}(P)\), then \(P\) is weakly-canonical.
\end{lemma}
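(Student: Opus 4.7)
The plan is to set $m=\mathcal{G}(P)=\mathrm{mex}(P^*)$ and show directly that $P^*=\{0,1,\dots,m-1\}$, which is exactly the weakly-canonical condition $|P^*|=m$. One containment is free: by the definition of mex, every value below $m$ is realised, so $\{0,1,\dots,m-1\}\subseteq P^*$ and in particular $|P^*|\ge m$. The content of the lemma therefore lies in the reverse containment, i.e.\ in ruling out any $k\in P^*$ with $k>m$.

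To rule these out I will argue by contradiction. Suppose some one-move follower $A$ of $P$ satisfies $\mathcal{G}(A)=k>m$. Since $k=\mathrm{mex}(A^*)$, every nonnegative integer strictly below $k$ appears in $A^*$; in particular $m\in A^*$. Hence there is a one-move follower $B$ of $A$ with $\mathcal{G}(B)=m$. Concatenating the two moves $P\to A\to B$ exhibits $B$ as a strict follower of $P$, so $B<P$ in the sense defined in the introduction. But then $\mathcal{G}(B)=m=\mathcal{G}(P)$, directly contradicting the hypothesis that no follower of $P$ shares its nimber. Consequently no such $k$ exists, so $P^*\subseteq\{0,1,\dots,m-1\}$ and the lemma follows.

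The only subtlety I anticipate is bookkeeping around the follower relation: I must be sure that $B\ne P$ so that $B<P$ is a genuine instance of the hypothesis. This is immediate because $B$ is reached from $P$ by two moves (each move strictly shrinks the poset, so $B\ne A\ne P$), but it is worth stating explicitly. Beyond that, the argument is a one-step application of the mex definition together with transitivity of the follower relation, and no induction or case analysis is required.
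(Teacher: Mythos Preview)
Your proof is correct and follows essentially the same line as the paper's own argument: both pass from a non-weakly-canonical option $A$ with $\mathcal{G}(A)>\mathcal{G}(P)$ to a further option $B$ of $A$ with $\mathcal{G}(B)=\mathcal{G}(P)$, yielding a follower $B<P$ that violates the hypothesis. Your write-up is in fact more careful than the paper's, since you spell out the mex bookkeeping and the point that $B\neq P$.
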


\begin{proof}
Suppose $P$ is not weakly-canonical. Then there exists a $p\in P$ so that \(\mathcal{G}(P-(p)_\leq)>\mathcal{G}(P)\).  Hence there is an option in $P-(p)_\leq$ to a game $A$ with $\mathcal{G}(A)= \mathcal{G}(P)$.
%
\end{proof}

\begin{observation}\label{restriction}
Let \(f:P\rightarrow Q\) be an order compressing function. Then \(f|_{P-p_\leq}\) is an order compressing function.
\end{observation}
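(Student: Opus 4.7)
The plan is to verify the biconditional in the definition of order compressing for $g := f|_{P-p_\leq}$. Write $P' := P-p_\leq$ for brevity. The overall strategy rests on the observation that the three bullet conditions in the definition are universally quantified over $z$ in the domain, so shrinking the domain can only weaken them. Hence the forward direction should be immediate and the reverse direction will reduce to extending the conditions from $P'$ to $P$ and then applying the order compressing property of $f$ on $P$.

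For the forward direction, suppose $x,y\in P'$ with $g(x)=g(y)=q$. Then $f(x)=f(y)=q$, and applying that $f$ is order compressing on $P$ yields the three conditions for every $z\in P$; in particular they hold for every $z\in P'\subseteq P$, which is what is required for $g$. For the reverse direction, suppose $x,y\in P'$ and some $q\in Q$ satisfy the three conditions for every $z\in P'$. I would aim to invoke the order compressing property of $f$ on all of $P$ to conclude $f(x)=f(y)=q$, so it suffices to verify the same three conditions for each remaining $z\in p_\leq$. For such a $z$ we have $z\geq_P p$; since $x,y\in P'$ means $x,y\not\geq_P p$, both $z\nless_P x$ and $z\nless_P y$ hold — otherwise $p\leq_P z<_P x$ would force $x\in p_\leq$, a contradiction, and symmetrically for $y$. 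Only the second bullet is then in force, and the task collapses to showing $f(z)\nless_Q q$ for every $z\geq_P p$.

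The main obstacle is this last step. Since every order compressing function is a homomorphism (noted immediately after the definition) and hence order preserving, $f(z)\geq_Q f(p)$ whenever $z\geq_P p$, so the problem reduces to establishing $f(p)\nless_Q q$. I would prove this by using elements of $P'$ lying strictly below $p$: each such element is in $P'$ and its image relates to $q$ through the hypothesized conditions (bullets~1 or 3, depending on its relation to $x$ and $y$), and then order preservation transports the constraint upward to $f(p)$. The edge case where no element of $P'$ lies below $p$ (so $p$ is minimal in $P$) is handled directly, since there is then no structural obstruction to $f(p)\nless_Q q$. Once the three conditions hold for every $z\in P$, the order compressing property of $f$ on $P$ immediately gives $f(x)=f(y)=q$, completing the reverse direction and hence the proof.
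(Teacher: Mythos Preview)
Your forward direction is exactly what the paper does. For the reverse direction, the paper's own argument is a one-sentence assertion (``It follows that \(f(x)=f(y)\) if and only if for each \(z\in P-p_\leq\subset P\)\ldots''), so your attempt to justify it carefully already goes further than the paper.

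That said, your plan for the reverse direction has a real gap. You correctly reduce the problem to showing \(f(p)\nless_Q q\), and you propose to obtain this from an element \(w\in P'\) with \(w<_P p\). But consider the case where such a \(w\) satisfies \(w<_P x\) and \(w<_P y\): bullet~1 gives only \(f(w)\leq_Q q\), and combining this with \(f(w)\leq_Q f(p)\) from order preservation yields no relation whatsoever between \(f(p)\) and \(q\). So ``order preservation transports the constraint upward to \(f(p)\)'' fails in exactly this case. (You also list only bullets~1 and 3 for such \(w\), but bullet~2 is possible too; ironically that case \emph{does} work, since \(f(w)\nless_Q q\) together with \(f(w)\leq_Q f(p)\) forces \(f(p)\nless_Q q\).) Finally, the minimal-\(p\) edge case is dismissed with ``no structural obstruction'', which is not an argument: you still need to derive \(f(p)\nless_Q q\) from the hypotheses, and nothing you have written does so when no \(w<_P p\) exists.

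In short: the paper treats the reverse implication as immediate, and your more scrupulous reading exposes that it is not; but your proposed route to fill the gap does not close it.
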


\begin{proof}
Let \(x,y\in P-p_\leq\).  Then since $f$ is order compressing, $f(x)=f(y)$ if and only for each $z\in P$, one of the three conditions in the definition of an order compressing function hold.  It follows that  $f(x)=f(y)$ if and only for each $z\in P-p_\leq\subset P$, one of the three conditions in the definition of an order compressing function hold. 
%
\end{proof}

We are ready to give our first main result.

\begin{theorem}\label{thm 1}
Let \(f:A\rightarrow Q\) and \(g:B\rightarrow Q\) be order compressing maps such that \(f(x)=\alpha\) where \(x\) is maximal in \(A\) and for all $\beta\in Q$ with \(\beta\neq \alpha\), \(f^{-1}(\beta)=g^{-1}(\beta)\). If $\mathcal{G}(f^{-1}(\alpha))=\mathcal{G}(g^{-1}(\alpha))$, then
\[
\mathcal{G}(A)=\mathcal{G}(B).
\]
\end{theorem}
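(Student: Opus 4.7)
The plan is to proceed by strong induction on $|A|+|B|$; in the inductive step I show $A^*=B^*$, giving $\mathcal{G}(A)=\mathcal{G}(B)$ since both equal the mex of the respective option-value sets. The first step is to extract a structural consequence of the order compressing condition in the presence of the maximal $x$. Given any $p\in A\setminus f^{-1}(\alpha)$, apply the three defining conditions to the pair $x,y$ for an arbitrary $y\in f^{-1}(\alpha)$, with test element $z=p$. The third (XOR) branch would force $f(p)=\alpha$ and is ruled out; hence either $p<_A y$ for every $y\in f^{-1}(\alpha)$ (in which case the first condition gives $f(p)\leq_Q\alpha$, and so $f(p)<_Q\alpha$), or $p\not<_A y$ for every such $y$ (in which case the second condition gives $f(p)\not<_Q\alpha$). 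A symmetric application to two elements $q,q'\in f^{-1}(\gamma)$ with test element $z=x$ shows $|f^{-1}(\gamma)|\leq 1$ whenever $\gamma>_Q\alpha$: maximality of $x$ forces $x\not<_A q$ and $x\not<_A q'$, and the second condition then demands $\alpha\not<_Q\gamma$, contradicting $\gamma>_Q\alpha$. The analogous statements hold for $g$ and $B$.

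With this lemma in hand I case-split on $f(p)$. If $f(p)<_Q\alpha$, the move at $p$ strips all of $f^{-1}(\alpha)$ along with anything above $p$ outside $f^{-1}(\alpha)$; since the non-$\alpha$ factors of $A$ and $B$ agree, the residual posets $A-p_\leq$ and $B-p_\leq$ coincide (both have empty $\alpha$-factor and identical non-$\alpha$ parts), so their nimbers match. If $f(p)\not\leq_Q\alpha$, the $\alpha$-factor is untouched; Observation~\ref{restriction} gives that the restricted maps are order compressing, the non-$\alpha$ factors still agree, the $\alpha$-factor nimbers are unchanged, and $x\in A-p_\leq$ is still maximal (since $p\leq_A x$ would give $f(p)\leq_Q\alpha$, a contradiction). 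The inductive hypothesis then yields $\mathcal{G}(A-p_\leq)=\mathcal{G}(B-p_\leq)$.

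The remaining case $f(p)=\alpha$ is the delicate one. Here one pairs the move $p\in f^{-1}(\alpha)$ in $A$ with some $p'\in g^{-1}(\alpha)$ in $B$ and aims for $\mathcal{G}(A-p_\leq)=\mathcal{G}(B-p'_\leq)$. The equality $\mathcal{G}(f^{-1}(\alpha))=\mathcal{G}(g^{-1}(\alpha))$, together with the Sprague--Grundy theorem applied to the two $\alpha$-factors as impartial games, supplies a correspondence between their option sets; the inductive hypothesis applied to the residual games then closes the loop. The principal obstacle is to align the higher-factor deletions induced by $p$ and by $p'$: a move at $p$ may delete higher-factor elements lying above $p$, and the chosen $p'$ must induce the same deletions in $B$. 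The uniqueness bound $|f^{-1}(\gamma)|\leq 1$ for $\gamma>_Q\alpha$ together with the rigidity of how the lower factors sit beneath $f^{-1}(\alpha)$ constrain the possible deletions enough that such a pairing is achievable, though this is where the bulk of the technical work concentrates. The symmetric argument handles $B^*\subseteq A^*$.
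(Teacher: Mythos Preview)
Your plan hinges on proving $A^*=B^*$, but this equality can fail. Take $Q=\{\beta<\alpha\}$, let $f^{-1}(\beta)=g^{-1}(\beta)$ be a single point $p_0$, let $g^{-1}(\alpha)$ be a single point, and let $f^{-1}(\alpha)$ be the three-element poset $\{p_1<p_2,\;p_1<p_3\}$ with $p_2,p_3$ incomparable. Both $\alpha$-factors have nimber $1$, so the hypotheses hold; but one computes $A^*=\{0,1,3\}$ while $B$ is a $2$-chain with $B^*=\{0,1\}$. The theorem is satisfied ($\mathcal{G}(A)=\mathcal{G}(B)=2$), yet $A^*\neq B^*$. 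Your sentence ``the Sprague--Grundy theorem \dots\ supplies a correspondence between their option sets'' is precisely where the argument breaks: equal nimbers do \emph{not} entail equal, or even bijective, option-value sets, and the option of nimber $3$ arising from removing $p_2$ has no counterpart in $B$.

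You also misidentify the technical obstacle. Applying the order-compressing condition with a single repeated element $q\in f^{-1}(\gamma)$ and test element $z=x$ already yields $\alpha\not<_Q\gamma$, so in fact $f^{-1}(\gamma)=\emptyset$ for every $\gamma>_Q\alpha$; a move inside $f^{-1}(\alpha)$ therefore never deletes anything outside $f^{-1}(\alpha)$, and there is nothing to ``align''. The genuine difficulty is that one $\alpha$-factor may have options with nimber exceeding $\mathcal{G}(f^{-1}(\alpha))$ that the other lacks. The paper handles this by splitting on whether the $\alpha$-factors are weakly-canonical: when both are, your pairing succeeds; when $f^{-1}(\alpha)$ is not, Lemma~\ref{not weaklycanonical} supplies a proper follower $Y<f^{-1}(\alpha)$ with $\mathcal{G}(Y)=\mathcal{G}(f^{-1}(\alpha))$, and passing to the corresponding $A'<A$ closes the induction; the remaining case uses a reversibility-style argument to show that the surplus options of $B$ cannot produce the value $\mathcal{G}(A)$.
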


\begin{proof}
%
%
%

Let $Q$ and $\alpha$ be fixed.  Note that if $A=\emptyset$ or $B=\emptyset$ then the result is trivially true. Let $A$ be a minimal counterexample.  That is for all \(A'<A\), \(A'\) is not a counterexample. Let \(S\) be the set of all posets \(P\) where there exists an order compression \(h:P\rightarrow Q\) such that for all \(\beta\neq \alpha\in Q\), \(f^{-1}(\beta)=h^{-1}(\beta)\) and \(\mathcal{G}(f^{-1}(\alpha))=\mathcal{G}(h^{-1}(\alpha))\). Assume $B$ is a poset in $S$ so that for all \(B'<B\) such that \(B'\in S\),  \(\mathcal{G}(A)=\mathcal{G}(B')\).


By assumption, \(A\setminus  f^{-1}(\alpha)=B\setminus g^{-1}(\alpha)\). Let $a$ be an element of the poset $A$ so that $a\notin f^{-1}(\alpha)$.  Then $a$ is an element of $ B\setminus g^{-1}(\alpha)$.  The restriction of  $f$ and $g$ to $A-a_\leq$ and $B-a_\leq$ is an order compressing function, by Observation~\ref{restriction}.  Further the $Q$-factorization on these posets induced by the restrictions are identical for each element of $Q$, except possibly $\alpha$. Hence, by the minimality of our counter example, it follows that \(\mathcal{G}(A-a_\leq)=\mathcal{G}(B-a_\leq)\). It follows that for some $a\in  f^{-1}(\alpha)$, $\mathcal{G}(A-a_\leq)=\mathcal{G}(B)$ or for some $b\in B\setminus g^{-1}(\alpha)\), $(\mathcal{G}(B-b_\leq)=\mathcal{G}(A)$. 


\medskip
\noindent {\bf Claim 1}: For all \(c\in f^{-1}(\alpha)\) and \(d\in g^{-1}(\alpha)\) such that \(\mathcal{G}(f^{-1}(\alpha)-c_\leq)=\mathcal{G}(g^{-1}(\alpha)-d_\leq)\), \(\mathcal{G}(A-c_\leq)=\mathcal{G}(B-d_\leq)\).

\smallskip 
\begin{proof} The claim follows trivially from the minimality of $A$ and $B$.\end{proof}

\bigskip
Suppose both \(f^{-1}(\alpha)\) and \(g^{-1}(\alpha)\) are weakly-canonical and recall that $\mathcal{G}(f^{-1}(\alpha))=\mathcal{G}(g^{-1}(\alpha))$.  It follows that for all \(c\in f^{-1}(\alpha)\), there exists a \(d\in g^{-1}(\alpha)\) and for all \(d\in g^{-1}(\alpha)\), there exists a \(c\in f^{-1}(\alpha)\) such that \(\mathcal{G}(f^{-1}(\alpha)-c_\leq)=\mathcal{G}(g^{-1}(\alpha)-d_\leq)\). From Claim 1,  $A^*=B^*$ and hence \(\mathcal{G}(A)=\mathcal{G}(B)\). Therefore we consider the following two cases.

\medskip
\noindent Case 1: \(f^{-1}(\alpha)\) is not weakly-canonical.

\medskip
By Lemma~\ref{not weaklycanonical} there exists a game $Y<f^{-1}(\alpha)$ such that $\mathcal{G}(Y)=\mathcal{G}(f^{-1}(\alpha))$. Hence there is a set of moves on $f^{-1}(\alpha)$ which result in the game $Y$.  Playing this identical set of moves on the poset $A$, we obtain a poset $A'<A$.  Let $f_R$ be the restriction of $f$ to $A'$.  Then as $\alpha$ is maximal, \(f_R^{-1}(\beta)=f^{-1}(\beta)\) for all \(\beta\neq \alpha\) and \(f_R^{-1}(\alpha)=Y\), so $\mathcal{G}(f_R^{-1}(\alpha))= \mathcal{G}(f^{-1}(\alpha))$. Hence, \(A'\in S\) and \(A'<A\) so \(A'\) is not a counterexample. But since \(A'\) is not a counterexample, \(\mathcal{G}(A')=\mathcal{G}(A)$ and $\mathcal{G}(A')=\mathcal{G}(B)\). Hence $\mathcal{G}(A)=\mathcal{G}(B)$.

\medskip
\noindent Case 2: \(g^{-1}(\alpha)\) is not weakly-canonical and \(f^{-1}(\alpha)\) is weakly-canonical.

\medskip
As $\mathcal{G}(f^{-1}(\alpha))=\mathcal{G}(g^{-1}(\alpha))$, Claim 1 implies that \(A^*\subset B^*\). Since \(\mathcal{G}(A)\neq \mathcal{G}(B)\), it follows that \(\mathcal{G}(A)< \mathcal{G}(B)\) and hence there exists a \(b\in g^{-1}(\alpha)\) such that \(\mathcal{G}(g^{-1}(\alpha)-b_\leq)>\mathcal{G}(g^{-1}(\alpha))\) and \(\mathcal{G}(B-b_\leq)=\mathcal{G}(A)\). From \(\mathcal{G}(g^{-1}(\alpha)-b_\leq)>\mathcal{G}(g^{-1}(\alpha))\), it follows there exists a \(d\in g^{-1}(\alpha)-b_\leq\) such that \(\mathcal{G}(g^{-1}(\alpha)-b_\leq-d_\leq)=\mathcal{G}(g^{-1}(\alpha))\). 

By Observation~\ref{restriction}, the restriction of $g$ to $B-b_\leq-d_\leq$ is an order compressing function.  The $Q$-factorization on these posets induced by the restrictions are identical for each element of $Q$, except possibly $\alpha$. By the minimality of $B$,  
\(\mathcal{G}(B-b_\leq-d_\leq)=\mathcal{G}(A)\). But this implies that \(\mathcal{G}(B-b_\leq)\neq \mathcal{G}(A)\) which is a contradiction. Hence Case 2 is established and the Theorem follows.
\end{proof}

We now establish the converse statement is also true.

\begin{theorem}
Let \(f:A\rightarrow Q\) and \(g:B\rightarrow Q\) be order compressing maps such that \(f(x)=\alpha\) where \(x\) is maximal in \(A\) and for all $\beta\in Q$ with \(\beta\neq \alpha\in Q\), \(f^{-1}(\beta)=g^{-1}(\beta)\). If $\mathcal{G}(A)=\mathcal{G}(B)$, then
$$\mathcal{G}(f^{-1}(\alpha))=\mathcal{G}(g^{-1}(\alpha)).$$ 
\end{theorem}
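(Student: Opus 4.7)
The plan is to argue the contrapositive by a single-move reduction and then invoke Theorem~\ref{thm 1}. Suppose for contradiction that $\mathcal{G}(f^{-1}(\alpha)) \neq \mathcal{G}(g^{-1}(\alpha))$; by the symmetry of the roles of $A$ and $B$, I may assume $m := \mathcal{G}(f^{-1}(\alpha)) < \mathcal{G}(g^{-1}(\alpha))$. Because $\mathcal{G}(g^{-1}(\alpha)) = mex((g^{-1}(\alpha))^*)$ and $m$ is strictly less than this value, $m$ must appear as the nimber of some option of $g^{-1}(\alpha)$; that is, there exists $d \in g^{-1}(\alpha)$ with $\mathcal{G}(g^{-1}(\alpha)-d_\leq) = m$.

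Next I would play the same move $d$ in the full poset $B$, producing the one-move follower $B' = B - d_\leq$. By Observation~\ref{restriction}, the restriction $g_R := g|_{B'}$ is an order compressing map $B' \to Q$. The maximality condition on $\alpha$, exactly as used in the proof of Theorem~\ref{thm 1}, guarantees that removing $d_\leq$ from $B$ does not touch any $g^{-1}(\beta)$ with $\beta \neq \alpha$, so $g_R^{-1}(\beta) = g^{-1}(\beta) = f^{-1}(\beta)$ for every such $\beta$, while $g_R^{-1}(\alpha) = g^{-1}(\alpha)-d_\leq$. In particular $\mathcal{G}(g_R^{-1}(\alpha)) = m = \mathcal{G}(f^{-1}(\alpha))$, so the pair $f, g_R$ satisfies the hypotheses of Theorem~\ref{thm 1}.

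Applying Theorem~\ref{thm 1} to $f$ and $g_R$ then yields $\mathcal{G}(A) = \mathcal{G}(B')$. Combined with the hypothesis $\mathcal{G}(A) = \mathcal{G}(B)$, this gives $\mathcal{G}(B') = \mathcal{G}(B)$. But $\mathcal{G}(B')$ lies in $B^*$ since $B'$ is reached from $B$ by a single move, contradicting $\mathcal{G}(B) = mex(B^*)$. Hence no such disagreement between the $\alpha$-factor nimbers is possible, and the theorem follows.

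The only genuine subtlety I anticipate is confirming that the removal $B \to B'$ truly affects only the $\alpha$-factor; this is precisely the point at which Theorem~\ref{thm 1} invokes the maximality of $\alpha$, so the same justification transfers verbatim. Everything else reduces to the mex definition of the Sprague--Grundy value together with one appeal to the forward direction already proved.
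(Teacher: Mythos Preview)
Your proof is correct and follows essentially the same contrapositive argument as the paper: assume the $\alpha$-factor nimbers differ, use the mex definition to find a single move in the larger-nimber factor matching the other, and then apply Theorem~\ref{thm 1} to produce an option of one poset with the same Grundy value as the other, contradicting $\mathcal{G}(A)=\mathcal{G}(B)$. The only cosmetic difference is that the paper takes the WLOG direction $\mathcal{G}(f^{-1}(\alpha))>\mathcal{G}(g^{-1}(\alpha))$ and makes the move in $A$, whereas you take the opposite inequality and move in $B$; these are symmetric and yield the same argument.
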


\begin{proof}
Suppose that \(\mathcal{G}(f^{-1}(\alpha))\neq \mathcal{G}(g^{-1}(\alpha))\). Without loss of generality, \(\mathcal{G}(f^{-1}(\alpha))> \mathcal{G}(g^{-1}(\alpha))\). Hence there exists an \(a\in f^{-1}(\alpha)\) such that \(\mathcal{G}(f^{-1}(\alpha)-a_\leq)=\mathcal{G}(g^{-1}(\alpha))\). But from Theorem~\ref{thm 1}, this implies that \(\mathcal{G}(A-a_\leq)=\mathcal{G}(B)\), so by the definition of \(\mathcal{G}\), \(\mathcal{G}(A)\neq\mathcal{G}(B)\). 
\end{proof}

\begin{corollary}\label{cor1}
Let \(f:A\rightarrow Q\) and \(g:B\rightarrow Q\) be order compressing maps such that \(f(x)=\alpha\) where \(x\) is maximal in \(A\) and for all $\beta\in Q$ with \(\beta\neq \alpha\in Q\), \(f^{-1}(\beta)=g^{-1}(\beta)\). Then
\[
\mathcal{G}(f^{-1}(\alpha))=\mathcal{G}(g^{-1}(\alpha))\mbox{ if and only if } \mathcal{G}(A)=\mathcal{G}(B).
\]
\end{corollary}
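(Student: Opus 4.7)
The corollary is a biconditional whose two directions have already been established as separate theorems just above, so the plan is essentially to paste them together. The forward implication, from $\mathcal{G}(f^{-1}(\alpha))=\mathcal{G}(g^{-1}(\alpha))$ to $\mathcal{G}(A)=\mathcal{G}(B)$, is exactly the content of Theorem~\ref{thm 1}. The reverse implication, from $\mathcal{G}(A)=\mathcal{G}(B)$ to $\mathcal{G}(f^{-1}(\alpha))=\mathcal{G}(g^{-1}(\alpha))$, is exactly the content of the immediately preceding (unnumbered) theorem.

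So my plan is simply to note that the hypotheses of Corollary~\ref{cor1} are precisely the common hypotheses of those two theorems: $f$ and $g$ are order compressing maps into the same poset $Q$, the fibre over $\alpha$ is distinguished by $\alpha$ being the image of a maximal element of $A$, and the fibres over every $\beta\neq\alpha$ agree. Under these hypotheses, each of the two theorems gives one direction of the equivalence, and together they establish the biconditional.

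There is no substantive obstacle here; the only thing I would be careful about is verifying that the hypotheses really do match verbatim (they do), and that no additional assumption is smuggled into either theorem that is not present in the corollary. In particular, the converse theorem's proof invokes Theorem~\ref{thm 1} on the poset $A-a_{\leq}$, and that invocation needs the restriction $f|_{A-a_{\leq}}$ to still be order compressing with $\alpha$ remaining the image of a maximal element; Observation~\ref{restriction} handles the order compressing part, and maximality of $\alpha$'s preimage's maximal elements is preserved under taking down-set complements since we only remove upward-closed sets. Because this verification is already folded into the proofs of the two theorems, no new argument is needed at the corollary level, and the proof reduces to a one-line citation of both theorems.
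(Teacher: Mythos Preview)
Your proposal is correct and matches the paper's approach exactly: the corollary is stated immediately after the two theorems and is given no separate proof in the paper, since it is simply the conjunction of Theorem~\ref{thm 1} (forward direction) and the theorem just preceding the corollary (reverse direction). Your additional remarks about verifying that the hypotheses match and that the restriction argument is already handled inside those theorems are accurate but unnecessary at the corollary level, just as you say.
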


\begin{figure}[h!]
\centering
\scalebox{1.0}{ 
	\begin{tikzpicture}

    	\draw(0.5,0.5)--(1,1.5)--(1.5,0.5);
    	\draw(4.5,1.5)--(4.5,0.5);

    	\draw[->] (2.25,1)--(3.75,1);
		\draw[->] (6.75,1)--(5.25,1);
		\node[fill=none] at (3,1.25) (nodes) {\(f\)};
		\node[fill=none] at (6,1.25) (nodes) {\(g\)};
			
		\node[fill=none] at (1,2.25) (nodes) {\(A\)};
		\node[fill=none] at (4.5,2.25) (nodes) {\(Q\)};
		\node[fill=none] at (7,2.25) (nodes) {\(B\)};
    		
    	\filldraw[blue] (0.5,0.5) (0.4,0.4) -- (0.6,0.4) -- (0.6,0.6) -- (0.4,0.6) -- cycle;
    	\filldraw[red] (1,1.5) circle [radius=1mm];
    	\filldraw[blue] (1.5,0.5) (1.4,0.4) -- (1.6,0.4) -- (1.6,0.6) -- (1.4,0.6) -- cycle;
    		
    	\filldraw[red] (4.5,1.5) circle [radius=1mm];
    	\filldraw[blue] (4.5,0.5) (4.4,0.4) -- (4.6,0.4) -- (4.6,0.6) -- (4.4,0.6) -- cycle;
    		    		
    	\filldraw[red] (7,1) circle [radius=1mm];
  	\end{tikzpicture}
}
    \caption{An example}
    \label{F5}
\end{figure}
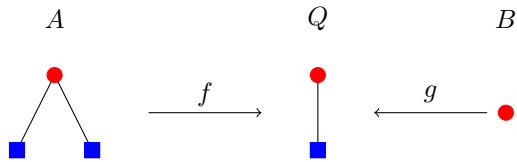

Figure~\ref{F5} gives an example of a poset where the assumptions of Theorem~\ref{thm 1} are satisfied except for the maximality of $x$ in $A$. In this example, \(\mathcal{G}(A)=2$ and $\mathcal{G}(B)=1$ and hence the assumption that \(x\) is maximal may not be relaxed in Theorem~\ref{thm 1}. Figure~\ref{F6}, gives an example of a non-trival poset.  The following Corollary implies this poset is a previous player win position.

\begin{figure}[h!]
\centering 
\scalebox{0.8}{ 
    \begin{tikzpicture}
    

	\draw(7,0)--(6,1)--(3.5,3)--(3.5,4)--(3,7);
	\draw(7,0)--(7,1)--(3.5,3)--(3.5,4)--(4,7);
	\draw(6,0)--(6,1)--(5.5,3)--(5.5,4)--(4,7);
	\draw(6,0)--(7,1)--(5.5,3)--(5.5,4)--(3,7);

	\draw(0,0)--(1,1)--(2,0)--(3,1)--(3.5,3);
	\draw(3,1)--(5.5,3);
	\draw(3,1)--(2,3);
	\draw(3,1)--(0,3);
	\draw(3,1)--(5.5,3);
	\draw(1,1)--(2,3);
	\draw(1,1)--(0,3);
	\draw(1,1)--(3.5,3);
	\draw(1,1)--(5.5,3);
	\draw(6,0)--(7,1)--(5.5,3)--(5.5,4)--(3,7);

	\draw(0,3)--(0,4)--(1,5)--(2,4)--(2,3)--(2,4)--(2.5,5);
	\draw(1,5)--(3,7);
	\draw(1,5)--(4,7);
	\draw(2.5,5)--(3,7);
	\draw(2.5,5)--(4,7);

    	\filldraw[purple] (0,0) (0.1,-0.1) -- (0,0.1) -- (-0.1,-0.1) -- cycle;
    	\filldraw[purple] (1,1) (0.9,0.9) -- (1,1.1) -- (1.1,0.9) -- cycle;
    	\filldraw[purple] (2,0) (1.9,-0.1) -- (2,0.1) -- (2.1,-0.1) -- cycle;
    	\filldraw[purple] (3,1) (3.1,0.9) -- (3,1.1) -- (2.9,0.9) -- cycle;

    	\filldraw[green] (3.5,3) (3.4,2.9) -- (3.6,2.9) -- (3.6,3.1) -- (3.4,3.1) -- cycle;
    	\filldraw[green] (3.5,4) (3.4,3.9) -- (3.6,3.9) -- (3.6,4.1) -- (3.4,4.1) -- cycle;
    	\filldraw[green] (5.5,3) (5.4,2.9) -- (5.6,2.9) -- (5.6,3.1) -- (5.4,3.1) -- cycle;
    	\filldraw[green] (5.5,4) (5.4,3.9) -- (5.6,3.9) -- (5.6,4.1) -- (5.4,4.1) -- cycle;

    	\filldraw[blue] (6,0) (5.95,-0.1) -- (6.05,-0.1) -- (6.1,0.05) -- (6,0.15) -- (5.9,0.05) -- cycle;
    	\filldraw[blue] (6,1) (5.95,0.9) -- (6.05,0.9) -- (6.1,1.05) -- (6,1.15) -- (5.9,1.05) -- cycle;
    	\filldraw[blue] (7,0) (6.95,-0.1) -- (7.05,-0.1) -- (7.1,0.05) -- (7,0.15) -- (6.9,0.05) -- cycle;
    	\filldraw[blue] (7,1) (6.95,0.9) -- (7.05,0.9) -- (7.1,1.05) -- (7,1.15) -- (6.9,1.05) -- cycle;

    	\filldraw[orange] (0,3) circle [radius=1mm];
    	\filldraw[orange] (0,4) circle [radius=1mm];
    	\filldraw[orange] (2,3) circle [radius=1mm];
    	\filldraw[orange] (2,4) circle [radius=1mm];
    	\filldraw[orange] (1,5) circle [radius=1mm];
    	\filldraw[orange] (2.5,5) circle [radius=1mm];

    	\filldraw[pink] (3,7) (3,6.9) -- (3.1,7) -- (3,7.1) -- (2.9,7) -- cycle;
    	\filldraw[pink] (4,7) (4,6.9) -- (4.1,7) -- (4,7.1) -- (3.9,7) -- cycle;

    \end{tikzpicture}
  }
    \caption{A poset and factorization whose factors all have nimber 0}
    \label{F6}
\end{figure}
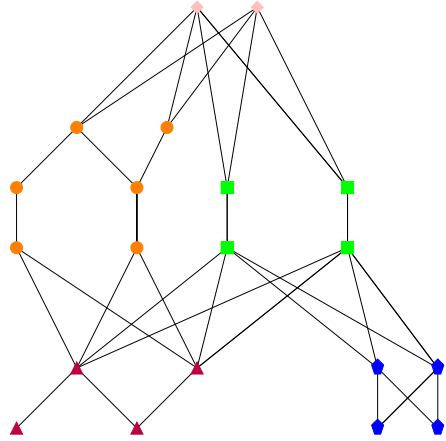

\begin{corollary}
Let \(f:A\rightarrow Q\) be an order compressing map, if for all \(x\in Q\) \(\mathcal{G}(f^{-1}(x))=0\), then \(\mathcal{G}(A)=0\)
\end{corollary}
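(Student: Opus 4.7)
I would argue by induction on $|A|$, keeping the standing hypothesis ``every factor has nimber $0$'' fixed throughout. The base case $A = \emptyset$ is immediate, since $\mathcal{G}(\emptyset) = 0$. The strategy for the inductive step is to peel off one whole factor at a time via Corollary~\ref{cor1}, reducing to a strictly smaller instance.

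For the inductive step I would first select $\alpha \in f(A)$ that is maximal in the image $f(A) \subseteq Q$, and pick $x \in f^{-1}(\alpha)$ that is maximal in $A$. Such an $x$ exists because, since $f$ is an order-preserving homomorphism and $\alpha$ is maximal in $f(A)$, any $z \in A$ with $z > y$ for some $y \in f^{-1}(\alpha)$ must satisfy $f(z) \ge \alpha$ and hence $f(z) = \alpha$; walking upward from any member of $f^{-1}(\alpha)$ therefore lands at a maximal element of $A$ still in that fiber. This same observation shows that $f^{-1}(\alpha)$ is upward-closed in $A$.

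Next, let $B := A \setminus f^{-1}(\alpha)$ and $g := f|_B$. Because $f^{-1}(\alpha)$ is upward-closed, it can be stripped off from $A$ by a sequence of legal poset-game moves: repeatedly remove $p_{\leq}$ for some $p$ minimal in what remains of $f^{-1}(\alpha)$, noting that each such $p_{\leq}$ is contained in $f^{-1}(\alpha)$. Iterated use of Observation~\ref{restriction} then gives that $g$ is an order compressing map into $Q$, with $g^{-1}(\alpha) = \emptyset$ and $g^{-1}(\beta) = f^{-1}(\beta)$ for every $\beta \neq \alpha$. Since $\mathcal{G}(g^{-1}(\alpha)) = \mathcal{G}(\emptyset) = 0 = \mathcal{G}(f^{-1}(\alpha))$ and $x$ is maximal in $A$ with $f(x) = \alpha$, Corollary~\ref{cor1} yields $\mathcal{G}(A) = \mathcal{G}(B)$.

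Finally, the map $g$ still satisfies the present corollary's hypothesis (every fiber, including the now-empty fiber over $\alpha$, has nimber $0$), and $|B| < |A|$ because $f^{-1}(\alpha)$ is nonempty. The inductive hypothesis therefore gives $\mathcal{G}(B) = 0$, whence $\mathcal{G}(A) = 0$. The only step beyond a direct invocation of Corollary~\ref{cor1} is verifying that an entire factor can be removed as a sequence of legal moves, and this reduces to the upward-closure of $f^{-1}(\alpha)$ established above. I do not anticipate any substantial obstacle; the heart of the argument is already contained in Theorem~\ref{thm 1}.
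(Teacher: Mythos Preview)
Your proof is correct and follows essentially the same route as the paper's: pick the fiber through a maximal element of $A$, replace that fiber by $\emptyset$ via Theorem~\ref{thm 1} (equivalently Corollary~\ref{cor1}), and induct on the smaller poset. You are simply more explicit than the paper in verifying the side conditions---that the top fiber is upward-closed and that the restricted map remains order compressing via iterated use of Observation~\ref{restriction}---but the underlying argument is the same.
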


\begin{proof}
 Let \(A_1\subset A\) such that \(A_1=A\setminus f^{-1}(f(\alpha))\) where \(\alpha\) is maximal in \(A\). By definition \(\mathcal{G}(\emptyset)=0\) so by Theorem~\ref{thm 1}, \(\mathcal{G}(A_1)=\mathcal{G}(A)\). We may now repeat this process on any maximal element of $A_1$.  It follows by induction that \(\mathcal{G}(A)=0\).
\end{proof}

To extend the results we now consider $Q$-factorizations of posets where each factor has identical option value sets.  We start with a Lemma.

\begin{lemma}\label{empty}
For all posets \(P\), \(P^*=\{\}\), if and only if \(P=\emptyset\)
\end{lemma}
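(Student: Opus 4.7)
The plan is to prove both directions directly from the definition of a move in a poset game and from the definition of the option value set $P^* = \{\mathcal{G}(P'): P' \text{ one move from } P\}$.

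For the forward direction, if $P = \emptyset$, then $P$ contains no elements to remove, so there are no legal moves and hence no followers; consequently $P^*$ is an empty set of nimbers. This is immediate from the definition.

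For the reverse direction, I would argue by contrapositive. Suppose $P \neq \emptyset$, so there exists some $p \in P$. Then removing $p$ (together with the up-set $\{p' : p \leq_P p'\}$) is a legal move, producing a follower $P - p_\leq$. By recursion, $\mathcal{G}(P-p_\leq)$ is a well-defined non-negative integer and belongs to $P^*$, so $P^* \neq \{\}$.

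There is no real obstacle here; the statement is essentially a restatement of the fact that a nonempty poset always admits at least one move in a poset game. The lemma is being stated as a small bookkeeping fact, presumably to be used in the subsequent argument that factors with identical option value sets yield equivalent games, where one needs to rule out the degenerate case.
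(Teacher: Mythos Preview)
Your proof is correct and essentially identical to the paper's own argument: both establish that $P=\emptyset$ gives no moves and hence $P^*=\{\}$, and conversely that any nonempty $P$ admits at least one move (your contrapositive is logically the same as the paper's direct observation that only $\emptyset$ has no moves). There is nothing to add.
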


\begin{proof}
Let \(P=\emptyset\), by definition \(P\) has no elements so \(P^*=\{\}\). Now let \(P\) be a Poset such that \(P^*=\{\}\). Since \(P^*\) has no elements this implies that \(P\) has no moves. But only \(\emptyset\) has no moves, so \(P=\emptyset\).
\end{proof}

\begin{theorem}\label{thm 3}
Let \(f:A\rightarrow Q\) and \(g:B\rightarrow Q\) be order compressing maps so that  for all \(\beta\in Q\), \((f^{-1}(\beta))^*=(g^{-1}(\beta))^*\). Then for any fixed but arbitrary \(\alpha\in Q\) if \(a\in f^{-1}(\alpha)\) and \(b\in g^{-1}(\alpha)\) are  such that \(\mathcal{G}(f^{-1}(\alpha)-a_\leq)=\mathcal{G}(g^{-1}(\alpha)-b_\leq)\) then \(\mathcal{G}(A-a_\leq)=\mathcal{G}(B-b_\leq)\).
\end{theorem}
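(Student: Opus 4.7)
My plan is to prove Theorem~\ref{thm 3} by strong induction on $|A|+|B|$, targeting the stronger statement $(A-a_\leq)^* = (B-b_\leq)^*$; the nimber equality then follows by taking $\operatorname{mex}$ of both sides. The base case in which $A-a_\leq = B-b_\leq = \emptyset$ is handled by Lemma~\ref{empty}: both sides then have nimber $0$.

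For the inductive step, I would fix an option $c \in A-a_\leq$, set $\gamma = f(c)$, and use the hypothesis $(f^{-1}(\gamma))^* = (g^{-1}(\gamma))^*$ to produce a matching $d \in g^{-1}(\gamma)$ with $\mathcal{G}(f^{-1}(\gamma)-c_\leq) = \mathcal{G}(g^{-1}(\gamma)-d_\leq)$. I would then show $\mathcal{G}((A-a_\leq)-c_\leq) = \mathcal{G}((B-b_\leq)-d_\leq)$ by a case analysis on how $c$ relates to $a$. In the main case, $c$ and $a$ are incomparable (so $a \in A-c_\leq$) and likewise $d$ can be taken incomparable to $b$; the restrictions $f|_{A-c_\leq}$ and $g|_{B-d_\leq}$ are order compressing by Observation~\ref{restriction}, and applying the inductive hypothesis to $(A-c_\leq,\,B-d_\leq)$ with the original $\alpha, a, b$ yields the desired identity after swapping the order of move removals. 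When instead $c < a$ in $A$ (and symmetrically $d < b$ in $B$), the set identity $a_\leq \subseteq c_\leq$ gives $(A-a_\leq)-c_\leq = A-c_\leq$, and the claim reduces to $\mathcal{G}(A-c_\leq) = \mathcal{G}(B-d_\leq)$, an instance of Theorem~\ref{thm 3} with $\gamma$ in place of $\alpha$ that requires a secondary inductive measure (say, the height of $\alpha$ in $Q$) to handle. The reverse inclusion $(B-b_\leq)^* \subseteq (A-a_\leq)^*$ is symmetric.

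The main obstacle is ensuring that the inductive hypothesis applies to the pair $(A-c_\leq,\,B-d_\leq)$: this requires the factor equalities $((f|_{A-c_\leq})^{-1}(\beta))^* = ((g|_{B-d_\leq})^{-1}(\beta))^*$ to persist for every $\beta$, which is nontrivial for $\beta = \gamma$ (only nimbers coincide after the move, not option value sets) and for $\beta > \gamma$ in $Q$ (such factors may be truncated by the removal of $c_\leq$). In addition, the matching $d$ must be chosen so that $d \not\geq b$ in $B$; this is automatic when $\gamma \not\geq \alpha$ in $Q$, but when $\gamma \geq \alpha$ one must leverage the uniform factor-to-factor structure forced by the order compressing definition --- whether an element of $g^{-1}(\gamma)$ lies above $g^{-1}(\alpha)$ depends only on the element and the factor labels, not on the specific target $b$ --- to select an appropriate $d$. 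I expect the resolution to require either strengthening the induction hypothesis to track a property preserved under move restriction, or a bootstrap argument using Corollary~\ref{cor1} applied iteratively to factors corresponding to maximal elements of $Q$, reducing the problem to the setting of Theorem~\ref{thm 1}.
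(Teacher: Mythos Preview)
Your diagnosis of the obstacle is accurate, and it is fatal to the direct option-matching induction you propose: after playing $c$ in $A$ and a matching $d$ in $B$, the $\gamma$-factors of $A-c_\leq$ and $B-d_\leq$ satisfy only $\mathcal{G}(f^{-1}(\gamma)-c_\leq)=\mathcal{G}(g^{-1}(\gamma)-d_\leq)$, not $(f^{-1}(\gamma)-c_\leq)^*=(g^{-1}(\gamma)-d_\leq)^*$, so the hypothesis of Theorem~\ref{thm 3} fails for the pair $(A-c_\leq,\,B-d_\leq)$ and the induction cannot be invoked. Neither a secondary height induction on $\alpha$ nor the case split on the relation between $c$ and $a$ repairs this, because the defect lives in the $\gamma$-factor itself and is independent of $\alpha$.

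The paper's proof avoids the difficulty by inserting an \emph{intermediate poset} $C$, an idea your final sentence gestures toward but does not pin down. Having fixed $a$ and $b$, one constructs $C$ together with an order compressing map $h:C\to Q$ by taking $h^{-1}(\alpha)$ to be the $\alpha$-factor of $A-a_\leq$ and $h^{-1}(\beta)$ to be the $\beta$-factor of $B-b_\leq$ for every $\beta\neq\alpha$. Two separate, easier comparisons now give the result. First, $B-b_\leq$ and $C$ have identical factors everywhere except at $\alpha$, and their $\alpha$-factors have equal nimber by hypothesis; since $\alpha$ is now maximal, Theorem~\ref{thm 1} yields $\mathcal{G}(B-b_\leq)=\mathcal{G}(C)$ directly, with no induction needed. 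Second, $A-a_\leq$ and $C$ have \emph{identical} $\alpha$-factors by construction, while for $\beta\neq\alpha$ the factors are unchanged from $A$ and $B$ respectively, so $(f_R^{-1}(\beta))^*=(h^{-1}(\beta))^*$ for every $\beta\in Q$; the full hypothesis of Theorem~\ref{thm 3} therefore holds between $A-a_\leq$ and $C$, and minimality of $A$ gives $(A-a_\leq)^*=C^*$, hence $\mathcal{G}(A-a_\leq)=\mathcal{G}(C)$. The point is that the single comparison requiring only nimber equality at one factor is offloaded to Theorem~\ref{thm 1}, while the comparison carried by the induction enjoys full option-value-set agreement on every factor---exactly the condition your direct approach cannot maintain through a second move.
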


\begin{proof}
Let \(A=\emptyset\), then for all \(\beta\in Q\), \(f^{-1}(\beta)=\emptyset\). By Lemma~\ref{empty}, if \((f^{-1}(\alpha))^*=(g^{-1}(\alpha))^*=\{\}\), then \(f^{-1}(\alpha)=\emptyset=g^{-1}(\alpha)\). This implies that \(A=\emptyset=B\).

Let $Q$ be given and choose $A$ so that for all \(A'<A\), \(A'\) is not a counterexample. 
Fix $\alpha\in Q$ and let \(a\in f^{-1}(\alpha)\) and \(b\in g^{-1}(\alpha)\) so that \(\mathcal{G}(f^{-1}(\alpha)-a_\leq)=\mathcal{G}(g^{-1}(\alpha)-b_\leq)\).  By Observation~\ref{restriction}, the restriction of $f$ to $A-a_\leq$, say $f_R$, and the restriction of $g$ to $B-b_\leq$, say $g_R$ are order compressing functions.


By assumption \(A-a_\leq\) and \(B-b_\leq\) have the same option value sets for all \(f_R\) and \(g_R\)-factors except possibly at the now maximal \(f_R\) and \(g_R\)-factor \(f_R^{-1}(\alpha)\) and \(g_R^{-1}(\alpha)\). Define the poset $C$ and an order compressing map \(h:C\rightarrow Q\) so that $h^{-1}(\alpha)=f_R^{-1}(\alpha)$ and for all $\beta\neq \alpha$, $h^{-1}(\beta)=g_R^{-1}(\beta)$. On one hand, by Theorem~\ref{thm 1},   $\mathcal{G}(B-b_\leq)=\mathcal{G}(C).$ On the other hand, $h^{-1}(\alpha)=f_R^{-1}(\alpha)$ and for all $\beta\neq \alpha$, $(h^{-1}(\beta))^*=(g_R^{-1}(\beta))^*$.  As $A-a_\leq<A$, $A-a_\leq$ is not a counterexample and it follows that $(A-a_\leq)^*=C^*$ and hence \(\mathcal{G}(A-a_\leq)=\mathcal{G}(C)\).  Therefore $\mathcal{G}(A-a_\leq)=\mathcal{G}(B-b_\leq)$.

\end{proof}

\begin{corollary}\label{cor 4}
Let \(f:A\rightarrow Q\) and \(g:B\rightarrow Q\) be order compressing maps so that  for all \(\beta\in Q\), \((f^{-1}(\beta))^*=(g^{-1}(\beta))^*\). Then \(A^*=B^*\).
\end{corollary}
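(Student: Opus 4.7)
The plan is to establish $A^*\subseteq B^*$, since the reverse inclusion follows by the symmetric argument. I would begin by observing that every legal move on $A$ consists of choosing some element $a\in A$ and producing the follower $A-a_\leq$. Setting $\alpha=f(a)$, the same element $a$ lies in the factor $f^{-1}(\alpha)$, and because $f^{-1}(\alpha)$ inherits its order from $A$, the move $a$ on the factor is itself legal and yields $f^{-1}(\alpha)-a_\leq$. In particular, $\mathcal{G}(f^{-1}(\alpha)-a_\leq)\in (f^{-1}(\alpha))^*$.

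Using the hypothesis $(f^{-1}(\alpha))^*=(g^{-1}(\alpha))^*$, I can then choose some $b\in g^{-1}(\alpha)$ with $\mathcal{G}(g^{-1}(\alpha)-b_\leq)=\mathcal{G}(f^{-1}(\alpha)-a_\leq)$. Theorem~\ref{thm 3} applies directly to this matched pair and gives $\mathcal{G}(A-a_\leq)=\mathcal{G}(B-b_\leq)$, so the value $\mathcal{G}(A-a_\leq)$ lies in $B^*$. Since the move $a$ was arbitrary, this produces $A^*\subseteq B^*$; reversing the roles of $A$ and $B$ yields the desired equality.

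The main obstacle is really just the bookkeeping required to connect a move on the whole poset with a move on the appropriate factor: one must check that when the move $a\in A$ is restricted to $f^{-1}(\alpha)$, the resulting position on the factor is exactly $f^{-1}(\alpha)-a_\leq$, which uses only that the factor carries the induced order. Once this is in place, Theorem~\ref{thm 3} does the remaining work. One edge case worth mentioning at the outset is $A=\emptyset$: every factor is then empty, so each $(g^{-1}(\beta))^*=\{\}$, and Lemma~\ref{empty} forces $B=\emptyset$, giving $B^*=\{\}=A^*$ trivially.
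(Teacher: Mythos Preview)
Your proposal is correct and follows essentially the same approach as the paper: for each move $a\in A$, locate its factor $f^{-1}(\alpha)$, use the hypothesis $(f^{-1}(\alpha))^*=(g^{-1}(\alpha))^*$ to find a matching $b\in g^{-1}(\alpha)$, and then invoke Theorem~\ref{thm 3} to conclude $\mathcal{G}(A-a_\leq)=\mathcal{G}(B-b_\leq)$, with the reverse inclusion by symmetry. Your version is simply more explicit about the bookkeeping and the $A=\emptyset$ edge case, which the paper's proof leaves implicit.
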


\begin{proof}
Let \(f:A\rightarrow Q\) and \(g:B\rightarrow Q\) be order compressing maps so that  for all \(\beta\in Q\), \((f^{-1}(\beta))^*=(g^{-1}(\beta))^*\). \(A\) and \(B\) satisfy the assumptions of Theorem~\ref{thm 3} so for all elements \(a\in A\) the exists a  \(b\in B\) such that \(\mathcal{G}(A-a_\leq)=\mathcal{G}(B-b_\leq)\). By the same argument for all \(b\in B\) there exists an \(a\in A\) such that \(\mathcal{G}(A-a_\leq)=\mathcal{G}(B-b_\leq)\). This completes the proof.
\end{proof}

\section{Applications}

In this section we provide two examples of applications of the results of the previous section.  Consider the posets drawn in Figure~\ref{F100}.  We claim these all have the same nimber.

\begin{figure}[h!]
\centering
  \scalebox{0.6}{ 
    \begin{tikzpicture}

\draw[thick,black] (-0.5,-0.5) rectangle (4.5,5.5);

	\draw(0,2)--(0,3)--(0.5,4)--(1,3)--(1,2)--(3.5,3);
	\draw(4,4)--(3.5,3)--(2.75,4)--(2,3)--(1,2);
	\draw(0,1)--(0,2);
	\draw(0,1)--(1,2);
	\draw(2,1)--(0,2);
	\draw(2,1)--(1,2);
	\draw(4,1)--(0,2);
	\draw(4,1)--(1,2);

    	\filldraw[blue] (0,2) circle [radius=1mm];
    	\filldraw[blue] (0,3) circle [radius=1mm];
    	\filldraw[blue] (0.5,4) circle [radius=1mm];
    	\filldraw[blue] (1,2) circle [radius=1mm];
    	\filldraw[blue] (1,3) circle [radius=1mm];
    	\filldraw[blue] (2,3) circle [radius=1mm];
    	\filldraw[blue] (2.75,4) circle [radius=1mm];
    	\filldraw[blue] (3.5,3) circle [radius=1mm];
    	\filldraw[blue] (4,4) circle [radius=1mm];
    	
	\draw(0,1)--(1,0)--(2,1)--(3,0)--(4,1);

    	\filldraw[red] (1,0) (0.9,-0.1) -- (1.1,-0.1) -- (1.1,0.1) -- (0.9,0.1) -- cycle;
    	\filldraw[red] (3,0) (2.9,-0.1) -- (3.1,-0.1) -- (3.1,0.1) -- (2.9,0.1) -- cycle;
    	\filldraw[red] (0,1) (-0.1,0.9) -- (0.1,0.9) -- (0.1,1.1) -- (-0.1,1.1) -- cycle;
    	\filldraw[red] (2,1) (1.9,0.9) -- (2.1,0.9) -- (2.1,1.1) -- (1.9,1.1) -- cycle;
    	\filldraw[red] (4,1) (3.9,0.9) -- (4.1,0.9) -- (4.1,1.1) -- (3.9,1.1) -- cycle;

\draw[thick,black] (5.5,-0.5) rectangle (10.5,5.5);

	\draw(7,1)--(8,0)--(9,1)--(6,2)--(6,3)--(6.5,4)--(7,3)--(7,2)--(8,3)--(8.75,4)--(9.5,3)--(10,4);
	\draw(9,1)--(7,2);
	\draw(7,1)--(7,2);
	\draw(7,1)--(6,2);
	\draw(9.5,3)--(7,2);

    	\filldraw[blue] (6,2) circle [radius=1mm];
    	\filldraw[blue] (6,3) circle [radius=1mm];
    	\filldraw[blue] (6.5,4) circle [radius=1mm];
    	\filldraw[blue] (7,2) circle [radius=1mm];
    	\filldraw[blue] (7,3) circle [radius=1mm];
    	\filldraw[blue] (8,3)  circle [radius=1mm];
    	\filldraw[blue] (8.75,4) circle [radius=1mm];
    	\filldraw[blue] (9.5,3) circle [radius=1mm];
    	\filldraw[blue] (10,4) circle [radius=1mm];

    	\filldraw[red] (8,0) (7.9,-0.1) -- (8.1,-0.1) -- (8.1,0.1) -- (7.9,0.1) -- cycle;
    	\filldraw[red] (7,1) (6.9,0.9) -- (7.1,0.9) -- (7.1,1.1) -- (6.9,1.1) -- cycle;
    	\filldraw[red] (9,1) (8.9,0.9) -- (9.1,0.9) -- (9.1,1.1) -- (8.9,1.1) -- cycle;

\draw[thick,black] (11.5,-0.5) rectangle (14.5,5.5);
    	
	\draw(13,2)--(12,1)--(13,0)--(14,1)--(13,2);

		\filldraw[blue] (13,2) circle [radius=1mm];
    	
    	\filldraw[red] (13,0) (12.9,-0.1) -- (13.1,-0.1) -- (13.1,0.1) -- (12.9,0.1) -- cycle;
    	\filldraw[red] (12,1) (11.9,0.9) -- (12.1,0.9) -- (12.1,1.1) -- (11.9,1.1) -- cycle;
    	\filldraw[red] (14,1) (13.9,0.9) -- (14.1,0.9) -- (14.1,1.1) -- (13.9,1.1) -- cycle;

\node[fill=none] at (5,2.5) (nodes) {$\equiv$};
\node[fill=none] at (11,2.5) (nodes) {$\equiv$};
    \end{tikzpicture}
  }
  \caption{An example of three posets with nimber 3.}
  \label{F100}
\end{figure}
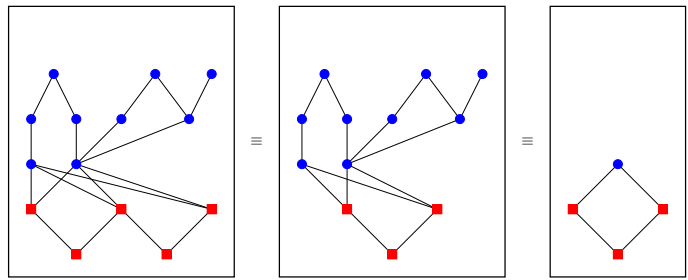

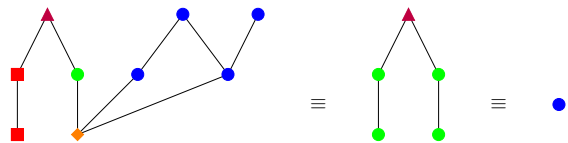
\begin{figure}[h!]
\centering
  \scalebox{0.8}{ 
    \begin{tikzpicture}

	\draw(0,2)--(0,3)--(0.5,4)--(1,3)--(1,2)--(3.5,3);
	\draw(4,4)--(3.5,3)--(2.75,4)--(2,3)--(1,2);

    	\filldraw[red] (0,2) (-0.1,1.9) -- (0.1,1.9) -- (0.1,2.1) -- (-0.1,2.1) -- cycle;
    	\filldraw[red] (0,3) (-0.1,2.9) -- (0.1,2.9) -- (0.1,3.1) -- (-0.1,3.1) -- cycle;
    	\filldraw[purple] (0.5,4) (0.6,3.9) -- (0.5,4.1) -- (0.4,3.9) -- cycle;
    	\filldraw[orange] (1,2)  (1,1.9) -- (1.1,2) -- (1,2.1) -- (0.9,2) -- cycle;
    	\filldraw[green] (1,3) circle [radius=1mm];
    	\filldraw[blue] (2,3) circle [radius=1mm];
    	\filldraw[blue] (2.75,4) circle [radius=1mm];
    	\filldraw[blue] (3.5,3) circle [radius=1mm];
    	\filldraw[blue] (4,4) circle [radius=1mm];
    	

%

    		\draw(6,2)--(6,3)--(6.5,4)--(7,3)--(7,2);

    	\filldraw[green] (6,2)  circle [radius=1mm];
    	\filldraw[green] (6,3)  circle [radius=1mm];
    	\filldraw[purple] (6.5,4) (6.6,3.9) -- (6.5,4.1) -- (6.4,3.9) -- cycle;
    	\filldraw[green] (7,2)   circle [radius=1mm];
    	\filldraw[green] (7,3) circle [radius=1mm];
%

%

    	\filldraw[blue] (9, 2.5) circle [radius=1mm];

\node[fill=none] at (5,2.5) (nodes) {$\equiv$};
\node[fill=none] at (8,2.5) (nodes) {$\equiv$};
    \end{tikzpicture}
  }
  \caption{Equivalences of the subposet of blue circles from Figure~\ref{F100} }
  \label{F101}
\end{figure}

To establish the first equivalence in Figure~\ref{F100}, consider the sub-posets  consisting of red squares.  In both cases, the the option value set of both of these sub posets in $\{0,2\}$ and hence the equivalence follows from Theorem~\ref{thm 3}.  The second equivalence in Figure~\ref{F100} follows from applying Theorem~\ref{thm 1} as we claim the sub-posets  consisting of blue circles, redrawn in Figure~\ref{F101} has nimber 1.  To see this observe that the subset of blue circles in first poset of Figure~\ref{F101} has nimber 0 and contains a maximal element of the poset. Thereom~\ref{thm 1} now implies the first equivalence in Figure~\ref{F101}.  The second equivalence in Figure~\ref{F101} can be found by applying Theorem~\ref{rjn}.

Theorem~\ref{thm 3} points to the importance of the collection of posets with the same option value set. A natural avenue of investigation is given a set of non-negative integers \(S\) is to enumerate the number of games that have option value set $S$.

\begin{theorem}\label{5}
For all \(S\subset \mathbb{N}_0\) such that \(S\neq \{\}\), \(\{P: P^*=S\}\neq \emptyset\) if and only if  \(|\{P: P^*=S\}|\) is non-finite.

\end{theorem}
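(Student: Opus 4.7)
The plan is to establish both directions, noting that the implication $|\{P : P^* = S\}|$ infinite $\Rightarrow \{P : P^* = S\} \neq \emptyset$ is trivial since every infinite set is non-empty. The substantive content is the converse: from a single poset with option value set $S$, produce infinitely many.

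So suppose $P$ is a poset with $P^* = S$. Since $S \neq \{\}$, Lemma~\ref{empty} forces $P \neq \emptyset$ and hence $|P| \geq 1$. For each odd integer $k \geq 1$, let $P_k$ denote the disjoint union of $k$ copies of $P$. Since $|P_k| = k|P|$, the posets $P_1, P_3, P_5, \ldots$ are pairwise non-isomorphic and so contribute distinct elements to $\{P : P^* = S\}$.

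Next I would verify that $P_k^* = S$ for every odd $k$. Iterating the disjoint-sum XOR formula from the corollary to the Sprague-Grundy Theorem, one obtains $\mathcal{G}(P_k) = \mathcal{G}(P)^{\oplus k} = \mathcal{G}(P)$ whenever $k$ is odd. Every option of $P_k$ arises from making a legal move within a single copy of $P$, turning that copy into some $P' < P$ while leaving the remaining $k - 1$ copies untouched; the nimber of the resulting position is $\mathcal{G}(P') \oplus \mathcal{G}(P)^{\oplus(k-1)} = \mathcal{G}(P')$, since $k - 1$ is even and $x \oplus x = 0$. Hence the option values of $P_k$ are exactly $\{\mathcal{G}(P') : P' < P\} = P^* = S$, producing the desired infinite family $\{P_k : k \text{ odd}\} \subseteq \{P : P^* = S\}$.

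The only delicate point is the parity of $k$: the XOR of the untouched copies' nimbers vanishes only when $k - 1$ is even, which is precisely what keeps the option values of $P_k$ from being shifted by $\mathcal{G}(P)$. This is why the construction uses odd $k$ rather than all positive integers. Beyond this there is no serious obstacle, as the argument relies only on the additivity of nimbers under disjoint union.
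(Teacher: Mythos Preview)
Your argument is correct and complete. One small notational slip: you write $\{\mathcal{G}(P') : P' < P\} = P^*$, but in the paper $A' < A$ means ``follower'' (reachable in any number of moves), whereas $P^*$ collects nimbers of positions exactly one move away. From context you clearly intend the one-move options, and the computation you actually carry out is the right one, so this does not affect validity.

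Your route, however, differs from the paper's. You take an odd number of disjoint copies of the whole poset and invoke only the classical XOR rule for disjoint sums. The paper instead fixes a \emph{single} element $q\in Q$, replaces it by an antichain of $2n+1$ copies, and appeals to Corollary~\ref{cor 4}: the identity $Q\to Q$ and the obvious map $Q_n\to Q$ are both order compressing, and for every $\beta\in Q$ the corresponding fibres (a singleton versus an odd antichain) have the same option value set $\{0\}$, so $Q_n^* = Q^*$. Your argument is more elementary and entirely self-contained given Sprague--Grundy, which is a virtue; the paper's construction, on the other hand, serves as an internal application of the order-compressing machinery developed in Section~3, and has the incidental feature that it can preserve connectedness of the poset, whereas your disjoint-union construction necessarily produces disconnected posets whenever $k\ge 3$.
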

\begin{proof}
If \(\{P: P^*=S\} = \emptyset\), then  \(|\{P: P^*=S\}|=0\) trivially. Assume \(Q\in \{P\in\textbf{P}: P^*=S\}\). Note the identity map on $Q$ is an order compressing function. For a given $n$, form $Q_n$ by replacing a single element of $Q$ with $2n+1$ copies of itself, so that the copies form an antichain.  It follows from Corollary~\ref{cor 4}, that $Q^*=Q_n^*$. As $n$ can be any integer, this concludes the proof.
\end{proof}

\section{Conclusion}

This paper establishes equivalences in ideal play between posets games that have obvious and not so obvious similarities.  In particular this work contributes to looking at ideal play on connected poset games.  Whether the converse of Theorem~\ref{thm 3} is true remains an open question. We end asking a related question.

Let the \textit{Lexicographic Product} of two posets, \(A\) and \(B\), denoted here $A\otimes B$, to  be the poset given by the Cartesian Product \(A\times B\) ordered by the following rule, \((a,b)\leq (a',b')\) if and only if,
\begin{itemize}
	\item \(a\leq_A a'\)
	\item if \(a=a'\), then \(b\leq_B b'\)
\end{itemize}
We note that there is a natural order compressing function $f:A\otimes B\rightarrow A$ where each $f$-factor of $A\otimes B$ is isomorphic to $B$. 

\begin{conjecture}
Let \(A,B\) be Posets Games, such that \(B=2^n\) where \(n\in\mathbb{N}_0\) and \(B\) is weakly-canonical. Then, for all \((a,b)\in A\otimes B\),
\[
\mathcal{G}(A\otimes B-(a,b)_\leq)=2^n\mathcal{G}(A-a_\leq)+\mathcal{G}(B-b_\leq)
\]
where multiplication and addition are standard for integers.
\end{conjecture}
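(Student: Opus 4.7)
The plan is to prove the conjecture by strong induction on $|A \otimes B|$, using Corollaries~\ref{cor1} and~\ref{cor 4} to reduce (whenever possible) to the pair $(\hat{A}, \mathrm{Nim}(2^n))$, where $\hat{A} = (A - a_\leq) \cup \{a\}$ and $\mathrm{Nim}(k)$ denotes the chain of $k$ elements. Writing $\alpha = \mathcal{G}(A - a_\leq)$ and $\beta = \mathcal{G}(B - b_\leq)$, I note that $B$ weakly-canonical with $\mathcal{G}(B) = 2^n$ forces $B^* = \{0, 1, \ldots, 2^n - 1\}$, so $\beta < 2^n$ and the target $2^n \alpha + \beta$ coincides with $2^n \alpha \oplus \beta$.

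Set $P = A \otimes B - (a, b)_\leq$ and let $f : P \to \hat{A}$ be the order compressing map $f(a', b') = a'$, under which $a$ is maximal in $\hat{A}$. The factor at $a$ is $B - b_\leq$, while each factor at $a' \in A - a_\leq$ is a full copy of $B$; the latter shares its option value set $\{0, 1, \ldots, 2^n - 1\}$ with $\mathrm{Nim}(2^n)$, so Corollary~\ref{cor 4} replaces every full factor by $\mathrm{Nim}(2^n)$ without changing $P^*$ and hence $\mathcal{G}(P)$. Then Corollary~\ref{cor1}, applied at the maximal element $a$, replaces $B - b_\leq$ by $\mathrm{Nim}(\beta)$, again preserving nimber. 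The outcome is exactly $P'' = \hat{A} \otimes \mathrm{Nim}(2^n) - (a, c_{\beta+1})_\leq$, where $c_{\beta+1}$ is the element whose removal (with everything above) leaves $\mathrm{Nim}(\beta)$. Since $|\hat{A} \otimes \mathrm{Nim}(2^n)| = |\hat{A}| \cdot 2^n \leq |A \otimes B|$ with equality only when $a$ is maximal in $A$ and $B = \mathrm{Nim}(2^n)$, a strict decrease permits invoking the inductive hypothesis on $(\hat{A}, \mathrm{Nim}(2^n))$ at position $(a, c_{\beta+1})$ and gives
\[
\mathcal{G}(P) = \mathcal{G}(P'') = 2^n \mathcal{G}(\hat{A} - a_\leq) + \beta = 2^n \alpha + \beta,
\]
using $\hat{A} - a_\leq = A - a_\leq$.

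When the reduction is the identity ($a$ maximal in $A$ and $B = \mathrm{Nim}(2^n)$), I would argue directly. If $a$ is comparable to every other element of $A$, then $A = (A - \{a\}) : \{a\}$ as an ordinal sum, so $P = ((A - \{a\}) \otimes B) : \mathrm{Nim}(\beta)$ and Theorem~\ref{rjn} together with the inductive hypothesis for the strictly smaller $A - \{a\}$ yields the formula after a direct $\mathrm{mex}$-at-depth-$\beta$ calculation. If $A$ is disconnected with $a$ in a component $A_1$, then $P$ splits as $(A_1 \otimes B - (a, b)_\leq) \sqcup \bigsqcup_{j \geq 2}(A_j \otimes B)$, and the Sprague--Grundy theorem combined with the inductive hypothesis on the smaller components (invoking the corollary $\mathcal{G}(A_j \otimes B) = 2^n \mathcal{G}(A_j)$ obtained by mexing the options given by the conjecture for $A_j$) produces the target.

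The main obstacle is the remaining sub-case in which $A$ is connected, $a$ is maximal in $A$, and some element of $A$ is incomparable to $a$. Here $P$ is a single connected poset that is neither an ordinal sum nor a disjoint union, so neither Theorem~\ref{rjn} nor a Sprague--Grundy decomposition directly applies, and the reduction machinery from Corollaries~\ref{cor1} and~\ref{cor 4} is exhausted. I would attempt to overcome this by strengthening the claim to a generalized identity
\[
\mathcal{G}(\tilde{P}) = 2^n \mathcal{G}(F(\tilde{P})) \oplus \bigoplus_{a' \in \Pi(\tilde{P})} \gamma_{a'}
\]
valid for every position $\tilde{P}$ reachable from $A \otimes B$, where $F(\tilde{P})$ and $\Pi(\tilde{P})$ record the sub-posets of full and partial blocks in $\tilde{P}$ and $\gamma_{a'}$ is the nimber of the partial block at $a'$, and proving this by induction on $|\tilde{P}|$ through a move-by-move analysis of the three types of legal move (within a partial block, at an $A$-position strictly below the current one, and at an $A$-position incomparable to the current one). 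The original conjecture then recovers as the special case with a single partial block at $a$.
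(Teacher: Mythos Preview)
The statement you are attacking is labelled a \textbf{Conjecture} in the paper; the authors offer no proof of it, so there is no argument of theirs to compare your attempt against.  Your write-up is therefore an attempted proof of an open problem, and should be read as such.

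The reduction step is sound.  Passing from $P=A\otimes B-(a,b)_\leq$ to $\hat A\otimes\mathrm{Nim}(2^n)-(a,c_{\beta+1})_\leq$ via Corollary~\ref{cor 4} (to replace the full $B$-blocks) and Corollary~\ref{cor1} (to replace the maximal partial block) is exactly the kind of move these results are designed for, and your verifications that $f$ is order compressing and that $a$ is maximal in $\hat A$ are correct.  The ordinal-sum sub-case and the disconnected sub-case also go through as you describe; the $\mathrm{mex}$-at-depth-$\beta$ computation in the former really does give $2^n\alpha+\beta$, since $G^*$ contains $\{0,\dots,2^n\alpha-1\}$ and misses $\{2^n\alpha,\dots,2^n\alpha+2^n-1\}$.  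One small technical point: your size inequality $|\hat A|\cdot 2^n\le |A|\cdot|B|$ is strict only when $a$ is not maximal in $A$ or $|B|>2^n$, not when $B\ne\mathrm{Nim}(2^n)$; you would need a secondary induction parameter (or simply observe that the reduction to $\mathrm{Nim}(2^n)$ can be done once, outside the induction) to make this clean.

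The genuine gap is exactly where you locate it: $A$ connected, $a$ maximal, and some element of $A$ incomparable to $a$.  At that point your argument stops being a proof and becomes a proposal.  The strengthened identity for arbitrary reachable $\tilde P$ is a natural guess, but you have not carried out the move-by-move verification, and there are real obstacles you do not mention.  A general follower of $A\otimes B$ can have a partial block of the form $B$ with several upward-closed sets removed, not just $B-b_\leq$ for a single $b$; for such blocks neither the bound $\gamma_{a'}<2^n$ nor weak-canonicity is automatic, and a move inside one partial block can simultaneously alter $F(\tilde P)$ (by annihilating the block) and several $\gamma_{a'}$ (by wiping out other blocks above it).  Until that analysis is actually done, the conjecture remains open in the connected case, just as the paper leaves it.
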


If true this would imply for any Lexicographic Product that satisfied the assumptions given \(A\otimes B\), then \(\mathcal{G}(A\otimes B)=\mathcal{G}(A)\mathcal{G}(B)\). One of the nicest properties of which being that if the left factor \(A\) has Nimber \(0\), then \(\mathcal{G}(A\otimes B)=0\). (If the right fact \(B\) has Nimber \(0\), then as $B$ is weakly-canonical, \(A\otimes B=\emptyset\). ) Furthermore, if true then this points to the possibility of equations like that of the ordinal sum existing more broadly.

\vspace{20pt}

\noindent {\bf Acknowledgements.}

\noindent The authors would like to thank Dr. Darien DeWolfe and Dr. Richard Nowakowski for sharing their insights and thoughts on various aspects of the paper. 

%

\end{document}